\documentclass[12pt, reqno]{amsart}

\usepackage{amsfonts,amssymb}
\usepackage{hyperref, graphicx}
\usepackage{epsfig}
\usepackage{latexsym}
\usepackage{amsmath,amsthm,calligra,mathrsfs}
\usepackage{mathrsfs}
\usepackage[all,cmtip]{xy}
\usepackage{comment}
\usepackage[dvipsnames]{xcolor}
\usepackage{tikz-cd}
\usepackage{mathrsfs}
\usepackage{bbm}
\usepackage{dynkin-diagrams}
\theoremstyle{plain}
\newtheorem{theorem}{Theorem}[section]
\newtheorem{lemma}[theorem]{Lemma}
\newtheorem{definition}[theorem]{Definition}
\newtheorem{proposition}[theorem]{Proposition}
\newtheorem{cor}[theorem]{Corollary}
\newtheorem{remark}[theorem]{Remark}
\newtheorem{example}[theorem]{Example}

\numberwithin{equation}{section}

\newcommand{\E}{\mathcal{E}}

\newcommand{\Hom}{\mathcal{H}\!\mathit{om}}
\newcommand{\im}{\mathrm{im}}

\begin{document}
\baselineskip=15.5pt

\title{Smooth Relative Connections on Quiver Bundles}
\author{Pavan~Adroja}
\address{Department of Mathematics, IIT Gandhinagar,
 Near Village Palaj, Gandhinagar - 382355, India}
 \email{pavan.a@iitgn.ac.in, adrojapavan@gmail.com}
\author{Sanjay~Amrutiya}
\address{Department of Mathematics, IIT Gandhinagar,
 Near Village Palaj, Gandhinagar - 382355, India}
 \email{samrutiya@iitgn.ac.in}
 \author{Riddhi~Patil}
\address{Department of Mathematics, IIT Gandhinagar,
 Near Village Palaj, Gandhinagar - 382355, India}
 \email{riddhi.patil@iitgn.ac.in}
\thanks{The research work of Pavan Adroja is financially supported by SERB project no. SPON/SERB/62041. The SERB-DST supports SA under project no. CRG/2023/000477. The research work of Riddhi Patil is financially supported by CSIR fellowship under the scheme 09/1031(17071)/2023-EMR-I}
\subjclass[2020]{Primary: 58A05, 53C05, 57R19, 16G20}
\keywords{smooth manifold, vector bundle, quiver representation, smooth connection, flat connection, local systems, homological algebra}
\date{}

\begin{abstract}
We develop a theory of smooth relative connections over the real path algebra $\mathbb{R}Q$ on smooth twisted quiver bundles. We give obstructions to the existence of a smooth relative connection on twisted quiver bundles. For tree-type quiver bundles, we establish a necessary and sufficient condition for the existence of a smooth relative connection. Additionally, we provide a framework for the representation theory of flat quiver bundles, relating the existence of flat relative connections to the underlying quiver representations.
\end{abstract}

\maketitle
\section{Introduction}
Connections on vector bundles are fundamental tools in differential geometry, used to formalise the notion of differentiating sections of vector bundles. They have been studied in various contexts and are central to many geometric constructions. Connections also play a foundational role in theoretical physics, particularly in gauge theory, where they model gauge fields and parallel transport. In this article, we introduce a notion of smooth relative connections over the real path algebra $\mathbb{R}Q$ on smooth twisted quiver bundles, which consist of collections of vector bundles related by morphisms twisted by vector bundles associated with the arrows of a quiver. This framework generalises classical connections within the language of quiver gauge theory. Our definition is motivated by the representation theory of quivers and is designed to be compatible with the underlying quiver structure.

Given any smooth vector bundle $\E$ over a smooth manifold $X$, a smooth connection always exists. However, this is not necessarily the case for twisted quiver bundles due to the presence of additional structure, namely the compatibility conditions imposed by the quiver arrows. In fact, difficulties arise even in simple cases. For example, for quivers of type $\mathbb{A}_n$, finding a smooth relative connection that respects the quiver structure can be nontrivial (see Example \ref{Ex: Non exist}). This obstruction to the existence of a smooth relative connection on twisted quiver bundles is analogous to the holomorphic setting. While smooth connections always exist on smooth vector bundles, holomorphic connections do not always exist on holomorphic vector bundles.

In this article, we study foundational properties of these relative connections and their equivalent characterisations. We also study obstructions to the existence of smooth relative connections on twisted quiver bundles. We provide a necessary and sufficient criterion for the existence of a smooth relative connection on twisted quiver bundles of tree-type, under the assumption that the twisting is trivial. Under the same assumption, we also describe a framework for characterising flat connections in this setting.

We briefly describe the content of the different sections of the paper.

Section 2 reviews the basic definitions of quivers, their representations, the notion of a twisted quiver bundle, and the twisted path algebra sheaf.

Section 3 provides several equivalent characterisations of a smooth relative connection on a smooth twisted quiver bundle. We also establish a cohomological criterion for the existence of such connections, inspired by the framework of \cite[Theorem 4.1]{GK}. To highlight the role of the quiver structure, we include a counterexample demonstrating how compatibility conditions along the arrows can obstruct the existence of a smooth relative connection. Furthermore, we express the existence of smooth relative connections on twisted quiver bundles in terms of the vanishing of an appropriate Atiyah class. This approach is motivated by Atiyah's work \cite{At} on the existence of holomorphic connections on holomorphic vector bundles.

Section 4 states our main result: a necessary and sufficient condition for a smooth twisted quiver bundle of tree-type, under the assumption of trivial twisting, to admit a smooth relative connection. The key technique involves decomposing each vector bundle in the twisted quiver bundle into subbundles that are compatible with the quiver structure.

In Section 5, we generalise the classical correspondence in differential geometry and topology between local systems, flat vector bundles, and representations of the fundamental group. This well-known equivalence connects the topological and differential structures of a space. In our setting, we extend this correspondence to twisted quiver bundles with trivial twisting. Specifically, we establish an equivalence between representations of the fundamental group in the category of quiver representations and smooth twisted quiver bundles with trivial twisting equipped with flat connections.


\section{Preliminaries}
In this section, we recall basic notions on quiver representations. For the basics of quiver representations, see \cite[Chapter~1]{DW}.

A \emph{quiver} is a directed graph given by a quadruple $(Q_0, Q_1, s, t)$, where $Q_0$ is a set of vertices and $Q_1$ is a set of arrows, and $s,t: Q_1\rightarrow Q_0$ are maps. For an arrow $a\in Q_1$, $s(a)$ and $t(a)$ are called the \emph{source} and \emph{target} of $a$, respectively.
	
	A quiver $Q$ is called \emph{finite} if the sets $Q_0$ and $Q_1$ are finite. Throughout this article, we assume the quiver $Q$ to be finite. 
	\begin{definition}
\rm  A \emph{path} $p$ in a quiver $Q$ of length $l \geq 1$ is a sequence $p=a_{l} a_{l-1}\cdots a_{1}$ of arrows in $Q_{1}$ such that $s(a_{i+1})=t(a_{i})$ for $i=1,2,\ldots,l-1$. We define $t(p)=t(a_{l})$ and $s(p)=s(a_{1})$. For every $i \in Q_{0}$, we introduce a trivial path $e_{i}$ of length $0$ and define $s(e_{i})=t(e_{i})=i$.
\end{definition}

	Let $\mathcal{C}$ be any category. A \emph{representation} of a quiver $Q$ in $\mathcal{C}$ is an assignment of an object $V_i$ to each vertex $i\in Q_0$ and a morphism $$\phi_a: V_{s(a)}\rightarrow V_{t(a)}$$
	to each arrow $a\in Q_1$. Given any two representations $(V,\phi)$ and $(W,\psi)$ of quiver $Q$, a \emph{morphism} 
	$$f: (V,\phi) \rightarrow (W,\psi)$$
	is a collection of morphisms $\{f_i: V_i\rightarrow W_i\}_{i\in Q_0}$ in the category $\mathcal{C}$ such that the diagram 
	\[
	\xymatrix{
		V_{s(a)} \ar[d]^{\phi_a} \ar[r]^{f_{s(a)}} & W_{s(a)} \ar[d]^{\psi_a}\\
		V_{t(a)} \ar[r]^{f_{t(a)}} & W_{t(a)}}
	\]
	commutes for all $a\in Q_1$.
	
	\begin{remark}
		\rm    The collection of representations of $Q$ as an object with the morphisms described above forms a category $\mathrm{Rep}(Q,\mathcal{C})$. If the category $\mathcal{C}$ is abelian, then the category $\mathrm{Rep}(Q,\mathcal{C})$ is also abelian.
	\end{remark}
	
	If $\mathcal{C}$ is a category of vector spaces over a field $k$, then $\mathrm{Rep}(Q, \mathcal{C})$ is equivalent to the category $kQ\text{-}\mathrm{mod}$ of finite dimensional left $kQ$-modules, where $kQ$ denotes the path algebra of quiver $Q$ over $k$ (see \cite[Proposition 1.5.4]{DW}). 
	
	Let $X$ be a connected smooth manifold, and let $\mathbf{VB}(X)$ denote the category of vector bundles over $X$. Then, a representation of $Q$ in the category $\mathbf{VB}(X)$ is called \emph{$Q$-bundle} over $X$. That means a $Q$-bundle $\mathcal{R} = (\mathcal{E}, \phi)$ is a collection of vector bundles  $\{\mathcal{E}_{i}\}_{i\in Q_0}$ together with morphisms  $\{\phi_{a}: \mathcal{E}_{s(a)}\rightarrow \mathcal{E}_{t(a)}\}_{a\in Q_1}$. Note that the category of locally free sheaves of $\mathcal{C}^{\infty}_X$-modules is equivalent to the category of smooth vector bundles over $X$, where $\mathcal{C}^{\infty}_X$ denotes the sheaf of smooth functions over $ X$. We will use these notions interchangeably.
	\subsection{Twisted Quiver Bundles} Let $Q$ be a quiver, and let $(X, \mathcal{C}^{\infty}_X)$ be a smooth connected manifold equipped with the sheaf of smooth functions. Fix a collection $M:=\{M_a\}_{a\in Q_1}$ of vector bundles over $X$. An \emph{$M$-twisted $Q$-bundle} is an assignment of a vector bundle $\mathcal{E}_i$ to each vertex $i\in Q_0$ and a morphism $\phi_{a}: \mathcal{E}_{s(a)}\otimes M_a \rightarrow \mathcal{E}_{t(a)}$ to each arrow $a\in Q_1$. 
	Given any two $M$-twisted $Q$-bundles $(\mathcal{E},\phi)$ and $(\mathcal{E}',\phi')$, a \emph{morphism} $f: (\mathcal{E},\phi) \rightarrow (\mathcal{E}', \phi')$ is a collection of morphisms $\{f_i: \mathcal{E}_i\rightarrow \mathcal{E}'_i\}_{i\in Q_0}$ of vector bundles such that the diagram 
	\[
	\xymatrix{
		\mathcal{E}_{s(a)} \otimes M_a \ar[d]^{\phi_a} \ar[rr]^{f_{s(a)}\otimes \mathrm{id}_{M_a}} & & \mathcal{E}'_{s(a)} \otimes M_a \ar[d]^{\psi_a}\\
		\mathcal{E}_{t(a)} \ar[rr]^{f_{t(a)}} & & \mathcal{E}'_{t(a)}}
	\]
	commutes for all $a\in Q_1$. The collection of $M$-twisted $Q$-bundles with the morphisms described above forms a category. If $M_a$ is the trivial line bundle $X\times \mathbb{R}$ for all $a\in Q_1$, then the category of $M$-twisted $Q$-bundles is equivalent to the category of $Q$-bundles by the natural identification $\mathcal{E} \otimes (X\times \mathbb{R})\simeq \mathcal{E}$.

	Let us define the twisted path algebra sheaf $\mathcal{A}$ associated with a quiver $Q$. Set $\mathcal{S}=\bigoplus_{i\in Q_0} \mathcal{C}_X^\infty\cdot e_i$, where $e_i$ are formal symbols indexed by $i\in Q_0$. Equip $\mathcal{S}$ with the structure of a commutative $\mathcal{C}^\infty_X$-algebra by defining $e_i\cdot e_{j}=e_i$ if $i=j$, and $e_i \cdot e_j=0$ otherwise. Let $\mathcal{M}=\bigoplus_{a\in Q_1} M_a$ be a locally free sheaf of $\mathcal{S}$-bimodules. The left $\mathcal{S}$-action is defined by $e_i \cdot m=m$ when $m\in M_a$ and $i=t(a)$, and $e_i\cdot m=0$ otherwise. Similarly, the right action is given by $m\cdot e_i=m$ when $m\in M_a$ and $i=s(a)$, and $m\cdot e_i=0$ otherwise. The \emph{$M$-twisted path algebra} of $Q$ is the tensor $\mathcal{S}$-algebra of the $\mathcal{S}$-bimodule $\mathcal{M}$, that is, 
	$$\mathcal{A}=\bigoplus_{n\geq 0} \mathcal{M}^{\otimes_{\mathcal{S}} n}\,,$$
	where $\mathcal{M}^{\otimes_{\mathcal{S}} n}$ denotes the $n$-fold tensor product of $\mathcal{M}$ over $\mathcal{S}$, with $\mathcal{M}^{\otimes_{\mathcal{S}} 0}=\mathcal{S}$. In particular, $\mathcal{A}$ is naturally a $\mathcal{C}^\infty_X$-algebra.
	
	\begin{remark}\label{QB-M}
		\rm The category of $M$-twisted $Q$-bundles is equivalent to the category of right $\mathcal{A}$-modules that are locally free as $\mathcal{C}^\infty_X$-modules. More generally, this correspondence is established for the category of $M$-twisted $Q$-sheaves (see \cite[Proposition~5.1]{AG}).
	\end{remark}

\section{Smooth connection on twisted quiver bundles}
	
	Let $Q$ be a quiver, and let $(X, \mathcal{C}^{\infty}_X)$ be a smooth connected manifold equipped with the sheaf of smooth functions. Denote by $T_X$ and $T_X^*$ the tangent and cotangent bundles of $X$, respectively. For a smooth vector bundle $\mathcal{E}$ over $X$, let $A^p(\mathcal{E})$ denote the sheaf of smooth $p$-forms on $X$ with values in $\E$.  i.e., $A^p(\mathcal{E})=\bigwedge^{p}T^*_X\, \otimes_{\mathcal{C}^{\infty}_X} \,\mathcal{E} $.
	
	\begin{definition}\label{QC}
		\rm
		A \emph{smooth connection relative over $\mathbb{R}Q$} on an $M$-twisted $Q$-bundle 
		$\mathcal{R} = (\mathcal{E}, \phi)$ over $X$ consists of a collection of smooth connections 
		\[
		\nabla_0 := \{\nabla_i : \mathcal{E}_i \to A^1(\mathcal{E}_i)\}_{i \in Q_0}
		\quad \text{and} \quad
		\nabla_1 := \{\nabla_a : M_a \to A^1(M_a)\}_{a \in Q_1}
		\]
		such that, for every $a \in Q_1$, the diagram
		\[
		\xymatrix{
			\mathcal{E}_{s(a)} \otimes M_a \ar[rr]^{\nabla_{s(a)}\otimes \nabla_a} \ar[d]^{\phi_a} 
			&& A^1(\mathcal{E}_{s(a)}\otimes M_a) \ar[d]^{\mathrm{id}_{T_X^*} \otimes \phi_a} \\
			\mathcal{E}_{t(a)} \ar[rr]^{\nabla_{t(a)}} 
			&& A^1(\mathcal{E}_{t(a)})
		}
		\]
		commutes, where $\nabla_{s(a)} \otimes \nabla_a$ denotes the induced connection on 
		$\mathcal{E}_{s(a)} \otimes M_a$ defined by $\nabla_{s(a)} \otimes \nabla_a:= \nabla_{s(a)}\otimes \mathrm{id}_{M_a}+ \mathrm{id}_{\mathcal{E}_{s(a)}}\otimes \nabla_{a}$. This condition is referred to as the 
		\emph{compatibility} of $\phi_a$ with the connections.
	\end{definition}
	
	In what follows, we only consider connections of the above type. We therefore drop the qualifier "over $\mathbb{R}Q$" and simply refer to them as smooth relative connections. An $M$-twisted $Q$-bundle equipped with the collections of smooth connections $\nabla_0$ and $\nabla_1$ satisfies the compatibility condition for $a \in Q_1$ if and only if the morphism $\phi_{a}$ is parallel with respect to the induced connection on $\Hom(\mathcal{E}_{s(a)}\otimes M_a, \mathcal{E}_{t(a)})$.
	
	\begin{remark}
		\rm
		Let $\mathcal{R}=(\mathcal{E},\phi)$ be an $M$-twisted $Q$-bundle admitting a smooth relative connection. The compatibility condition implies that each morphism
		$\phi_a : \mathcal{E}_{s(a)} \otimes M_a \to \mathcal{E}_{t(a)}$ maps parallel sections of
		$\mathcal{E}_{s(a)} \otimes M_a$ to parallel sections of $\mathcal{E}_{t(a)}$.
	\end{remark}

We introduce the notion of the dual quiver bundle associated with the \emph{opposite quiver} $Q^{op}$, which has the same set of vertices as $Q$ but with all arrows reversed. Let $\mathcal{R}=(\mathcal{E}, \phi)$ be an $M$-twisted $Q$-bundle on $X$. The \emph{dual $M$-twisted $Q^{op}$-bundle}, denoted by $\mathcal{R}^*=(\mathcal{E}^*, \phi^*)$, is defined by assigning the dual vector bundle $\mathcal{E}_i^*$ to each vertex $i\in Q_0$, and the dual morphism $\phi_a^*: \mathcal{E}_{t(a)}^* \otimes M_a \rightarrow \mathcal{E}_{s(a)}^*$  corresponding to $\phi_a: \mathcal{E}_{s(a)} \otimes M_a \rightarrow \mathcal{E}_{t(a)}$ to each arrow $a\in Q_1$ by the tensor-Hom adjunction. Thus, $\mathcal{R}^*$ naturally has the structure of an $M$-twisted $Q^{op}$-bundle over $X$.

\begin{remark}
	\rm	An \(M\)-twisted $Q$-bundle \(\mathcal{R}=(\mathcal{E},\phi)\) admits a smooth relative connection if and only if its dual $M$-twisted $Q^{op}$-bundle \(\mathcal{R}^*=(\mathcal{E}^*,\phi^*)\) admits a smooth relative connection. Indeed, if $(\{\nabla_{i}\}_{i\in Q_0}, \{\nabla_a\}_{a\in Q_1})$ is a smooth relative connection on \(\mathcal{R}\), then the induced dual connections \(\nabla_i^*\) on \(\mathcal{E}_i^*\), together with the same connections on \(M_a\), define a smooth relative connection on $\mathcal{R}^*$.
\end{remark}

	\begin{remark}
	\rm Let $\mathcal{R}=(\mathcal{E}, \phi)$ be an $M$-twisted $Q$-bundle on $X$, and fix smooth connections $\{\nabla_a\}_{a\in Q_1}$ on the twisting bundles $M_a$. If $\nabla=(\{\nabla_i\}_{i \in Q_0}, \{\nabla_a\}_{a\in Q_1})$ and $\nabla'=(\{\nabla'_i\}_{i \in Q_0}, \{\nabla_a\}_{a\in Q_1})$ are two smooth relative connections on $\mathcal{R}$ with the same connections on $M_a$, then their difference $\alpha_{i}:=\nabla'_i-\nabla_i$ defines a collection
	$$\alpha :=\{\alpha_i\}_{i\in Q_0}\in \prod_{i\in Q_0}\mathrm{Hom}_{\mathcal{C}^\infty_X}(\mathcal{E}_i, T^*_X \otimes_{\mathcal{C}^{\infty}_X}\mathcal{E}_{i})$$
	satisfying $\alpha_{t(a)}\circ \phi_a=(\mathrm{id}_{T_X^*} \otimes \phi_a) \circ (\alpha_{s(a)} \otimes \mathrm{id}_{M_a})$ for all $a\in Q_1$. Conversely, given a smooth relative connection $\nabla=(\{\nabla_i\}_{i \in Q_0}, \{\nabla_a\}_{a\in Q_1})$, any such collection $\{\alpha_{i}\}$ determines another smooth relative connection $(\{\nabla_i + \alpha_{i}\}_{i \in Q_0}, \{\nabla_a\}_{a\in Q_1})$. Hence, whenever a smooth connection exists, the set of all smooth connections is modeled on the affine subspace of $ \prod_{i\in Q_0}\mathrm{Hom}_{\mathcal{C}^\infty_X}(\mathcal{E}_i, T^*_X \otimes_{\mathcal{C}^{\infty}_X}\mathcal{E}_{i})$ consisting of those $\{\alpha_i\}_{i\in Q_0}$ satisfying  $\alpha_{t(a)}\circ \phi_a=(\mathrm{id}_{T_X^*} \otimes \phi_a) \circ (\alpha_{s(a)} \otimes \mathrm{id}_{M_a})$ for every $a\in Q_1$.
\end{remark}

The following is an example of a twisted quiver bundle which does not admit a smooth relative connection.
	
\begin{example}\label{Ex: Non exist}
		\rm   Consider a quiver $Q$ as follows:
		\[
		\begin{tikzcd}
			1 \arrow[r,"a"] & 2
		\end{tikzcd}
		\]
		Let $X= \mathbb{R}$ be a base manifold and $\mathcal{E}_{1}= X \times \mathbb{R}^{2}=\mathcal{E}_{2}$ be trivial vector bundles of rank $2$ over $X$, and $M_a = X \times \mathbb{R}$. Assign vector bundles $\mathcal{E}_{1}$ to vertex 1 and $\mathcal{E}_{2}$ to vertex 2. Define a vector bundle morphism $\phi_{a}: \mathcal{E}_{1} \rightarrow \mathcal{E}_{2}$ as $\phi_{a}(s)(x)= x s(x)$ under the natural isomorphism $\mathcal{E}_1 \otimes M_a \simeq \E_1$, where 
		\begin{align*}
			s(x) &= \begin{bmatrix}
				s_{1}(x) \\
				s_{2}(x) 
			\end{bmatrix}
		\end{align*}
		is a section of $\mathcal{E}_{1}$. Suppose there exist connection $\nabla_{1}$ on $\E_1$ and $\nabla_{2}$ in $\E_2$ such that the compatibility condition 
		\begin{align}\label{E: comp con}
		(\mathrm{id}_{T^*_X} \otimes \phi_a)	\circ \nabla_{1} = \nabla_{2} \circ \phi_{a}\
		\end{align}
		 holds. Note that any smooth connection on $\E_i$ is of the form $\nabla_{i}(s)= ds+ A_{i}s$ for $i=1,2$, where 
		$A_{1}$=$\begin{pmatrix}
			a_{11} & a_{12}\\
			a_{21} & a_{22}
		\end{pmatrix} \cdot dx$ and $A_{2}$=$\begin{pmatrix}
			b_{11} & b_{12}\\
			b_{21} & b_{22}
		\end{pmatrix} \cdot dx$
		are matrices of $1$-forms. Then, the condition \eqref{E: comp con} implies that 
		\begin{equation*}
			\begin{split}
				1+xb_{11} &=xa_{11}\\
				xb_{12}&=xa_{12}\\
				xb_{21}&=xa_{21}\\
				1+xb_{22} &=xa_{22}
			\end{split}
		\end{equation*}
	Hence, $a_{11}-b_{11}=\frac{1}{x}$ and $a_{22}-b_{22}=\frac{1}{x}$ are singular at $x=0$ which contradicts the requirement that $a_{ij}$ and $b_{ij}$ are smooth functions everywhere. Thus, the given twisted $Q$-bundle does not admit a smooth relative connection.
\end{example}

We can reformulate the notion of smooth relative connection in the category of $\mathcal{A}$-modules that are locally free as $\mathcal{C}^\infty_X$-modules, using \cite[Proposition~5.1]{AG} (see Remark \ref{QB-M}). Let $\mathcal{R}$ be an $M$-twisted $Q$-bundle, and let $\overline{\mathcal{R}}$ denote the corresponding right $\mathcal{A}$-module.
	
	\begin{definition}\label{MC}
		\rm  A \emph{smooth connection relative over $\mathbb{R}Q$} on $\overline{\mathcal{R}}$ consists of a collection of smooth connections $\nabla_{1}=\{\nabla_{a}\}$ on the bundles $M_a$ and an $\mathbb{R}$-linear map
	$$D: \overline{\mathcal{R}}\longrightarrow T_X^* \otimes_{\mathcal{C}^\infty_X} \overline{\mathcal{R}}$$
		satisfying the following conditions: 
		\begin{enumerate}
			\item Leibniz identity:
			$$D(fs)= df \otimes s + f \cdot D(s)\,,$$
			where $s \in \overline{\mathcal{R}}$ and $f \in \mathcal{C}_X^{\infty}$ are local sections.
			\item For every local section $s \in \overline{\mathcal{R}}$ and $a\in \mathcal{A}$,
			$$D(s\cdot a)= D(s)\cdot a + (\mathrm{id}_{T_X^*}\otimes \mu)(\nabla_{\mathcal{A}}(a)\otimes s)\,,$$
			where $\mu$ denotes the right action of $\mathcal{A}$ on $\overline{\mathcal{R}}$ and $\nabla_{\mathcal{A}}$ denotes the connection on $\mathcal{A}$ induced by $\nabla_{1}$.
		\end{enumerate}
	\end{definition}

This definition is equivalent to Definition \ref{QC} because, by the equivalence between $M$-twisted $Q$-bundles and locally free right $\mathcal{A}$-modules, the quiver morphisms are encoded in the right $\mathcal{A}$-action. Condition $(1)$ is the ordinary Leibniz rule over $\mathcal{C}^\infty_X$, while condition $(2)$ expresses compatibility with the $\mathcal{A}$-action via the induced connection on $\mathcal{A}$; when decomposed with respect to the idempotents $e_i$, it recovers exactly the vertex-wise connections and the usual compatibility condition.

\subsection{Obstructions to smooth relative connections on  twisted quiver bundles}	
	Let $\mathcal{R}=(\mathcal{E},\phi)$ be an $M$-twisted $Q$-bundle on $X$. We know that each $\mathcal{E}_{i}$, $i\in Q_0$, and each $M_a$, $a\in Q_1$, admits a smooth connection. The challenge is to choose $\nabla_i$, $i\in Q_0$ and $\nabla_a$, $a\in Q_1$, such that the compatibility condition 
	$$\nabla_{t(a)} \circ \phi_{a} = (\mathrm{id} \otimes \phi_{a}) \circ (\nabla_{s(a)}\otimes \nabla_a)$$
	holds for every arrow $a \in Q_{1}$. In this section, we fix connections on the twisting bundles and, with respect to this choice, give an obstruction to the existence of connections on the bundles $\mathcal{E}_i$ such that the compatibility conditions hold. Throughout this section, we fix a collection of connections $\nabla_{1}=\{\nabla_{a}\}_{a\in Q_1}$ on the twisting bundles.

If $\nabla^0_{i}$ is a smooth connection on $\mathcal{E}_{i}$, then every smooth connection on $\mathcal{E}_i$ is of the form $\nabla_{i}=\nabla^0_{i} + \alpha_{i}$, where $\alpha_{i}\in \mathrm{Hom}_{\mathcal{C}^\infty_X}(\mathcal{E}_i, T^*_X \otimes_{\mathcal{C}^{\infty}_X}\mathcal{E}_{i})$. Therefore, the compatibility condition becomes a constraint on $\alpha_{s(a)}$ and $\alpha_{t(a)}$:
	\begin{equation*}
		\left(\nabla^{0}_{t(a)}+ \alpha_{t(a)}\right) \circ \phi_{a} = \left(\mathrm{id} \otimes \phi_{a}\right) \circ \left( (\nabla^{0}_{s(a)}+ \alpha_{s(a)}) \otimes \nabla_{a} \right)   
	\end{equation*}
	or equivalently,
	\begin{equation*}
		\alpha_{t(a)} \circ \phi_{a} - (\mathrm{id} \otimes \phi_{a}) \circ (\alpha_{s(a)}\otimes \mathrm{id})=(\mathrm{id} \otimes \phi_{a}) \circ (\nabla^{0}_{s(a)} \otimes \nabla_a) - \nabla^{0}_{t(a)} \circ \phi_{a}\,. 
	\end{equation*}
	
	Define a linear map
	\begin{equation}
		L_{\nabla_1}: \bigoplus_{i \in Q_{0}} \mathrm{Hom}_{\mathcal{C}^\infty_X}(\mathcal{E}_i, T^*_X \otimes_{\mathcal{C}^{\infty}_X}\mathcal{E}_{i})\longrightarrow \bigoplus_{a\in Q_{1}} \mathrm{Hom}_{\mathcal{C}^\infty_X}(\mathcal{E}_{s(a)} \otimes M_a, T^*_X \otimes_{\mathcal{C}^{\infty}_X}\mathcal{E}_{t(a)})
	\end{equation}
by
$$\{\alpha_{i}\}_{i \in Q_{0}} \mapsto \{\alpha_{t(a)} \circ \phi_{a} - (\mathrm{id} \otimes \phi_{a}) \circ (\alpha_{s(a)} \otimes \mathrm{id}) \}_{a\in Q_{1}}$$
	
	Let $\beta_{a}:=(\mathrm{id} \otimes \phi_{a}) \circ (\nabla^{0}_{s(a)} \otimes \nabla_a) - \nabla^{0}_{t(a)} \circ \phi_{a}$. Then, $\{\beta_{a}\}_{a\in Q_1} \in \bigoplus_{a\in Q_{1}} \mathrm{Hom}_{\mathcal{C}^\infty_X}(\mathcal{E}_{s(a)} \otimes M_a, T^*_X \otimes_{\mathcal{C}^{\infty}_X}\mathcal{E}_{t(a)})$. We now state the following proposition, where $\mathcal{R}$ and $T_X^* \otimes \mathcal{R}:=\left(\{T^*_X \otimes \mathcal{E}_i\}_{i\in Q_0}, \{\mathrm{id}_{T^*_X}\otimes \phi_a\}_{a\in Q_1}\right)$ are viewed as objects of the category $\mathcal{A}\text{-}\mathrm{mod}$ of twisted $\mathcal{A}$-modules (see Remark~\ref{QB-M}).
	
	\begin{proposition}\label{P: L-map}
		Let $\mathcal{R}=(\E, \phi)$ be an $M$-twisted smooth $Q$-bundle. Then, $\mathcal{R}$ admits a smooth relative connection whose induced connections on $M_a$ are $\nabla_a$ for all $a\in Q_1$ if and only if the image of $\{\beta_a\}_{a\in Q_1}$ vanishes in $\mathrm{Ext}^1_{\mathcal{A}}(\mathcal{R}, T_X^* \otimes \mathcal{R})$.
	\end{proposition}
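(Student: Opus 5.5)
The plan is to identify $\mathrm{Ext}^1_{\mathcal{A}}(\mathcal{R}, T_X^*\otimes\mathcal{R})$ with the cokernel of $L_{\nabla_1}$. Once this is done, the proposition reduces to the computation already carried out before the statement. There we saw that the collection $\{\nabla^0_i+\alpha_i\}$, together with $\{\nabla_a\}$, is a smooth relative connection exactly when $L_{\nabla_1}(\{\alpha_i\})=\{\beta_a\}$. Since every smooth connection on $\mathcal{E}_i$ has the form $\nabla^0_i+\alpha_i$, a relative connection inducing $\nabla_a$ on $M_a$ exists if and only if $\{\beta_a\}\in\operatorname{im} L_{\nabla_1}$. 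So the whole content of the proposition is the isomorphism $\mathrm{coker}\, L_{\nabla_1}\cong \mathrm{Ext}^1_{\mathcal{A}}(\mathcal{R},T_X^*\otimes\mathcal{R})$, compatible with the map sending $\{\beta_a\}$ to its class.

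To get this isomorphism, I will use the standard projective resolution of a representation of a quiver, in its twisted form, for the right $\mathcal{A}$-module $\overline{\mathcal{R}}$:
\[
0\longrightarrow \bigoplus_{a\in Q_1} \mathcal{E}_{t(a)}\otimes_{\mathcal{S}} \mathcal{M}_a\otimes_{\mathcal{S}} \mathcal{A}\ \xrightarrow{\ \delta\ }\ \bigoplus_{i\in Q_0} \mathcal{E}_i\otimes_{\mathcal{S}} e_i\mathcal{A}\ \longrightarrow\ \overline{\mathcal{R}}\longrightarrow 0 .
\]
I will adjust the left/right conventions to match Remark~\ref{QB-M}. The map $\delta$ sends $v\otimes m\otimes p$ to $\phi_a(v\otimes m)\otimes p - v\otimes mp$, with the appropriate sign and ordering.

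Exactness is proved exactly as in the classical case over a field (cf. \cite[Chapter~1]{DW}). One filters by path length. Local freeness of $\mathcal{E}_i$ and $M_a$ over $\mathcal{C}^\infty_X$ lets the argument be run locally, where all sheaves are free.

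The next step is to apply $\mathrm{Hom}_{\mathcal{A}}(-,T_X^*\otimes\mathcal{R})$ to this resolution. By the tensor-Hom adjunction over $\mathcal{A}$, the Hom from the middle term is $\bigoplus_i \mathrm{Hom}_{\mathcal{C}^\infty_X}(\mathcal{E}_i,T_X^*\otimes\mathcal{E}_i)$. The Hom from the left term is $\bigoplus_a \mathrm{Hom}_{\mathcal{C}^\infty_X}(\mathcal{E}_{s(a)}\otimes M_a, T_X^*\otimes\mathcal{E}_{t(a)})$. Under these identifications the induced map is precisely $L_{\nabla_1}$, up to a global sign. One checks this by evaluating on generators $v\otimes m\otimes 1$.

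The Ext is computed by this resolution, so this gives $\ker L_{\nabla_1}=\mathrm{Hom}_{\mathcal{A}}$ and $\mathrm{coker}\, L_{\nabla_1}=\mathrm{Ext}^1_{\mathcal{A}}$. The first of these is consistent with the preceding remark on differences of connections.

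The main obstacle is justifying that this resolution computes $\mathrm{Ext}^1$ in the category $\mathcal{A}$-mod of sheaves. The terms $\mathcal{E}\otimes_{\mathcal{S}}\mathcal{A}$ are only "relatively projective", that is, induced from $\mathcal{S}$-modules. I will handle this by interpreting $\mathrm{Ext}^1_{\mathcal{A}}$ via Yoneda extensions of global modules. Concretely:

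1. Fineness of $\mathcal{C}^\infty_X$-modules, via partitions of unity, shows that $\mathrm{Hom}_{\mathcal{S}}$ out of induced modules has no higher cohomology.
2. Alternatively, one can build the extension class directly: the element $\{\beta_a\}$ defines an extension $0\to T_X^*\otimes\mathcal{R}\to \mathcal{J}\to\mathcal{R}\to0$, an "Atiyah-type" twisted quiver bundle in which $\mathcal{J}_i = \mathcal{E}_i\oplus T_X^*\otimes\mathcal{E}_i$ with arrows twisted by $\beta_a$.
3. One then verifies that this extension splits iff $\{\beta_a\}=L_{\nabla_1}(\{\alpha_i\})$ for some $\{\alpha_i\}$. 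Splittings are exactly of the form $s\mapsto (s,\alpha_i s)$.

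The second route is fully elementary, and I would present it if the relative-projectivity argument becomes unwieldy. I would also check independence of the choice of $\nabla^0_i$: changing $\nabla^0_i$ by $\gamma_i$ changes $\beta$ by $-L_{\nabla_1}(\gamma)$, so the class is well defined.
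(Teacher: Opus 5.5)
Your proposal is correct and follows the paper's argument. The paper's proof simply invokes Theorem~4.1 of [GK]: the long exact sequence obtained from the standard resolution of a module over the tensor algebra $\mathcal{A}$, with its first map identified with $L_{\nabla_1}$. That is exactly what you reprove via the standard resolution and the tensor--Hom adjunction.

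One remark: the equivalence as stated needs only exactness of that sequence at the term $\bigoplus_{a\in Q_1}\mathrm{Hom}(\mathcal{E}_{s(a)}\otimes M_a, T_X^*\otimes\mathcal{E}_{t(a)})$, and this is automatic from the long exact $\mathrm{Ext}$ sequence. So the fineness/relative-projectivity step you flag as the main obstacle is needed only for the stronger identification $\mathrm{coker}\, L_{\nabla_1}\cong \mathrm{Ext}^1_{\mathcal{A}}(\mathcal{R},T_X^*\otimes\mathcal{R})$. Also, your alternative extension $\mathcal{J}$ is just the jet sequence (\ref{Q-JetSeq}) trivialised by the $\nabla^0_i$.
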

	\begin{proof}
		Observe that the map $\partial$  defined in \cite[Theorem 4.1]{GK} is analogous to the map $L_{\nabla_1}$. To apply \cite[Theorem 4.1]{GK} in our framework, we replace the algebra $B$ and the modules $V$ and $W$ with the sheaves $\mathcal{A}, \mathcal{R}$ and $T^*_X \otimes \mathcal{R}$, respectively, as previously introduced. By
		\cite[Theorem 4.1]{GK}, the vanishing of the image of $\{\beta_a\}_{a\in Q_1}$ in $\mathrm{Ext}^1_{\mathcal{A}}(\mathcal{R}, T_X^* \otimes \mathcal{R})$ is equivalent to the existence of a smooth relative connection. In particular, if $\mathrm{Ext}^1_{\mathcal{A}}(\mathcal{R}, T_X^* \otimes \mathcal{R})=0$, then $\mathcal{R}$ admits a smooth relative connection.
	\end{proof}
	
	In \cite{At}, Atiyah described the obstruction to the existence of holomorphic connections on holomorphic vector bundles over complex manifolds in terms of a cohomological class, now called the Atiyah class. He also gave an equivalent description of a holomorphic connection in terms of the splitting of the jet sequence. We provide a similar description for the existence of smooth relative connections on twisted smooth quiver bundles.
		
	Let $\mathcal{R}=(\mathcal{E},\phi)$ be an $M$-twisted smooth quiver bundle, and fix a collection of smooth connections $\nabla_1:=\{ \nabla_{a}\}_{a\in Q_1}$ on $\{M_a\}_{a\in Q_1}$. We define the \emph{first jet $M$-twisted quiver bundle $J^1\mathcal{R}$ with respect to $\nabla_{1}$} as follows. For each $i\in Q_0$, set $J^1\mathcal{E}_i:=\mathcal{E}_i \oplus (T^*_{X}\otimes \mathcal{E}_i)$ with the $\mathcal{C}_X^{\infty}$-module structure on $J^1\mathcal{E}_i$ is given by 
	$f\cdot(s,\tau):=(fs,f\tau + df\otimes s)\,,$
	where $f\in \mathcal{C}_X^{\infty},\, s\in \mathcal{E}_i$ and $\tau\in T^*_{X}\otimes \mathcal{E}_{i}$ are local sections. For each $a\in Q_1$, define a morphism $\tilde{\phi}_{a}: \left(\mathcal{E}_{s(a)} \oplus (T^*_{X}\otimes \mathcal{E}_{s(a)})\right)\otimes M_a \rightarrow \mathcal{E}_{t(a)} \oplus \left(T^*_{X}\otimes \mathcal{E}_{t(a)}\right)$ by 
	$$\tilde{\phi}_a\left((s,\tau)\otimes m\right)=\left(\phi_a(s\otimes m), (\mathrm{id}_{T^*_X}\otimes \phi_a)(\tau \otimes m + s\otimes \nabla_a m)\right)\,,$$
	where $s \in \mathcal{E}_{s(a)}$, $\tau \in T_X^* \otimes \mathcal{E}_{s(a)}$ and $m\in M_a$ are local sections. Note that $s\otimes \nabla_am\in T_X^*\otimes \mathcal{E}_{s(a)}\otimes M_a$ under the natural identification $T_X^*\otimes \mathcal{E}_{s(a)}\otimes M_a\simeq \mathcal{E}_{s(a)}\otimes T_X^* \otimes M_a$.  Clearly, the map $\tilde{\phi}_a$ is $\mathcal{C}_X^{\infty}$-linear.

	There are natural morphisms of twisted quiver bundles $\alpha: T^*_X \otimes \mathcal{R} \rightarrow J^1\mathcal{R}$ and $\beta: J^1\mathcal{R} \rightarrow \mathcal{R}$ defined by collections $\{\alpha_i: T^*_X \otimes \mathcal{E}_i \rightarrow J^1\mathcal{E}_i \}_{i\in Q_0}$ and $\{\beta_i: J^1\mathcal{E}_i \rightarrow \mathcal{E}_i\}_{i\in Q_0}$, where $\alpha_i(\tau)=(0,\tau)$ and $\beta_i(s,\tau)=s$ for local sections $s\in \mathcal{E}_i$ and $\tau \in T^*_X \otimes \mathcal{E}_i$. Note that $\tilde{\phi}_a \circ \left(\alpha_{s(a)} \otimes \mathrm{id}_{M_a}\right)=\left(\mathrm{id}_{T^*_X}\otimes \phi_{a}\right)\circ \alpha_{t(a)}$ and $\phi_a \circ \left(\beta_{s(a)} \otimes \mathrm{id}_{M_a}\right)=\beta_{t(a)} \circ \tilde{\phi}_a$ for all $a\in Q_1$. Hence, the maps $\alpha$ and $\beta$ are morphisms of twisted quiver bundles. Moreover, they fit into the short exact sequence
	\begin{align}\label{Q-JetSeq}
		0 \longrightarrow T^*_X \otimes \mathcal{R} \overset{\alpha}{\longrightarrow} J^1\mathcal{R} \overset{\beta}{\longrightarrow}  \mathcal{R} \longrightarrow 0  
	\end{align}
	of $M$-twisted quiver bundles. Applying the functor $\mathrm{Hom}(\mathcal{R},-)$ to this short exact sequence, and using the argument in \cite[Theorem 5.1]{GK}, we have the following long exact sequence of cohomology:
	$$0 \longrightarrow \mathrm{Hom}(\mathcal{R}, T^*_X \otimes \mathcal{R}) \overset{\alpha^*}{\longrightarrow} \mathrm{Hom}(\mathcal{R}, J^1\mathcal{R}) \overset{\beta^*}{\longrightarrow} \mathrm{Hom}(\mathcal{R},\mathcal{R}) \overset{\delta}{\longrightarrow} \mathrm{Ext}^1(\mathcal{R}, T^*_X \otimes \mathcal{R}) \longrightarrow \cdots $$
	The image of $\mathrm{id}\in \mathrm{Hom}(\mathcal{R},\mathcal{R})$ under the connecting map $\delta$ is called the \emph{Atiyah class} of the $M$-twisted quiver bundle $\mathcal{R}$.

	\begin{proposition}\label{P: At Seq}
		Let $\mathcal{R}$ be an $M$-twisted quiver bundle, and fix a collection of connections $\nabla_1:=\{ \nabla_{a}\}_{a\in Q_1}$ on $\{M_a\}_{a\in Q_1}$. Then the following statements are equivalent:
		\begin{enumerate}
			\item The sequence (\ref{Q-JetSeq}) splits.
			\item The $M$-twisted quiver bundle $\mathcal{R}$ admits a smooth relative connection such that the collection of connections on the bundles $M_a$ coincides with $\nabla_1$.
			\item The Atiyah class of $\mathcal{R}$ vanishes.
		\end{enumerate}
	\end{proposition}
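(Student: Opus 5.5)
The plan is to prove $(1)\Leftrightarrow(2)$ directly, by matching splittings with connections, and then to obtain $(1)\Leftrightarrow(3)$ from the long exact sequence.

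For $(1)\Leftrightarrow(2)$, first note that a splitting of (\ref{Q-JetSeq}) in the category of $M$-twisted quiver bundles is a morphism $\sigma:\mathcal{R}\to J^1\mathcal{R}$ of twisted quiver bundles with $\beta\circ\sigma=\mathrm{id}$. Such a $\sigma$ is a collection of $\mathcal{C}^\infty_X$-linear maps $\sigma_i:\mathcal{E}_i\to J^1\mathcal{E}_i$. Because $\beta_i\sigma_i=\mathrm{id}$, each of them has the form $\sigma_i(s)=(s,\nabla_i s)$ for some $\mathbb{R}$-linear map $\nabla_i:\mathcal{E}_i\to T^*_X\otimes\mathcal{E}_i$.

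The twisted module structure $f\cdot(s,\tau)=(fs,f\tau+df\otimes s)$ gives
\[
\sigma_i(fs)=(fs,\nabla_i(fs)) \quad\text{and}\quad f\sigma_i(s)=(fs,f\nabla_i s+df\otimes s).
\]
Hence $\sigma_i$ is $\mathcal{C}^\infty_X$-linear if and only if $\nabla_i$ satisfies the Leibniz rule, that is, if and only if $\nabla_i$ is a connection. This is the classical Atiyah correspondence applied vertex by vertex.

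Next I compare the compatibility of $\sigma$ with the arrows. Take local sections $s\in\mathcal{E}_{s(a)}$ and $m\in M_a$. Then
\[
\tilde\phi_a\bigl(\sigma_{s(a)}(s)\otimes m\bigr)=\Bigl(\phi_a(s\otimes m),\,(\mathrm{id}\otimes\phi_a)\bigl(\nabla_{s(a)}s\otimes m+s\otimes\nabla_a m\bigr)\Bigr),
\]
and the second component is exactly $(\mathrm{id}\otimes\phi_a)\circ(\nabla_{s(a)}\otimes\nabla_a)(s\otimes m)$. On the other hand,
\[
\sigma_{t(a)}\bigl(\phi_a(s\otimes m)\bigr)=\bigl(\phi_a(s\otimes m),\,\nabla_{t(a)}\phi_a(s\otimes m)\bigr).
\]
Thus $\sigma$ commutes with $\phi_a$ and $\tilde\phi_a$ precisely when the diagram of Definition \ref{QC} commutes on decomposable tensors. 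Since both sides are additive, this gives commutativity everywhere. It follows that splittings correspond bijectively to smooth relative connections whose connections on the $M_a$ are $\nabla_1$. One should check here that $J^1\mathcal{R}$ is a genuine object, i.e.\ that $\tilde\phi_a$ is well defined on $J^1\mathcal{E}_{s(a)}\otimes M_a$ with the twisted structure; the paper has already asserted this. I expect this bookkeeping to be entirely routine.

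For $(1)\Leftrightarrow(3)$, I use the long exact sequence obtained from $\mathrm{Hom}(\mathcal{R},-)$. By exactness at $\mathrm{Hom}(\mathcal{R},\mathcal{R})$, we have $\delta(\mathrm{id})=0$ if and only if $\mathrm{id}=\beta^*(\sigma)=\beta\circ\sigma$ for some $\sigma\in\mathrm{Hom}(\mathcal{R},J^1\mathcal{R})$, which is exactly a splitting of (\ref{Q-JetSeq}).

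The main obstacle is justifying that this long exact sequence, with $\mathrm{Ext}^1$ taken in the category of twisted $\mathcal{A}$-modules, genuinely exists and that its connecting map behaves as in \cite[Theorem 5.1]{GK}. The paper takes this as given, and I would invoke it as stated. If needed, one can instead define $\delta(\mathrm{id})$ as the extension class of (\ref{Q-JetSeq}) itself in Yoneda $\mathrm{Ext}^1$. An extension class vanishes exactly when the extension splits, and this yields $(1)\Leftrightarrow(3)$ with no further input.
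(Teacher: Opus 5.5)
Your proposal is correct and follows essentially the same route as the paper. Splittings $\sigma_i=(\mathrm{id},\nabla_i)$ of the jet sequence correspond to relative connections with the prescribed $\nabla_a$: $\mathcal{C}^\infty_X$-linearity for the twisted module structure is the Leibniz rule, and compatibility with $\tilde\phi_a$ is the compatibility condition. The equivalence $(1)\Leftrightarrow(3)$ is exactness of the long exact sequence at $\mathrm{Hom}(\mathcal{R},\mathcal{R})$, which the paper simply calls standard.

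Your direct computation of $\tilde\phi_a(\sigma_{s(a)}(s)\otimes m)$, which keeps the term $s\otimes\nabla_a m$, is more explicit than the paper's diagram. For the paper's argument to work, its map labelled $p_{s(a)}$ on $J^1\mathcal{E}_{s(a)}\otimes M_a$ has to be read as $(s,\tau)\otimes m\mapsto \tau\otimes m+s\otimes\nabla_a m$.
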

	\begin{proof}
		Suppose the short exact sequence (\ref{Q-JetSeq}) splits. Then there exists a collection $\{\gamma_i:\mathcal{E}_i \rightarrow J^1\mathcal{E}_i\}_{i\in Q_0}$ such that $\beta_i \circ \gamma_i = \mathrm{id}_{\mathcal{E}_i}$ for all $i\in Q_0$. Let $p: J^1\mathcal{R} \rightarrow T^*_X \otimes \mathcal{R}$ be the projection map given by the projections $\{p_i: J^1\mathcal{E}_i \rightarrow T^*_X \otimes \mathcal{E}_i\}_{i\in Q_0}$ defined as $p_i(s,\tau)=\tau$. Note that each $p_i$ is $\mathbb{R}$-linear, but not necessarily $\mathcal{C}_X^{\infty}$-linear. Define $\nabla_i:= p_i \circ \gamma_i:\mathcal{E}_i \rightarrow T^*_X \otimes \mathcal{E}_i$. We claim that $\nabla_0=\{\nabla_i\}_{i\in Q_0}$ defines a smooth connection on $\mathcal{R}$. It is easy to see that $\nabla_i$ is a connection on $\mathcal{E}_i$. It remains to verify the compatibility condition. Let $a\in Q_1$. Then, the diagram 
		\[
		\xymatrix{
			\mathcal{E}_{s(a)} \otimes M_a \ar[d]^{\phi_a} \ar[rr]^{\gamma_{s(a) \otimes \mathrm{id}_{M_a}}\hspace{1.5cm}} &  & \left(\mathcal{E}_{s(a)} \oplus (T^*_X \otimes \mathcal{E}_{s(a)})\right)\otimes M_a \ar[d]^{\tilde{\phi}_a} \ar[rr]^{\hspace{1cm} p_{s(a)}\quad} & & T^*_X \otimes \mathcal{E}_{s(a)}\otimes M_a \ar[d]^{\mathrm{id}_{T^*_X}\otimes \phi_a} \\
			\mathcal{E}_{t(a)} \ar[rr]^{\gamma_{t(a)}\hspace{1cm}} &  & \mathcal{E}_{t(a)} \oplus (T^*_X \otimes \mathcal{E}_{t(a)})  \ar[rr]^{\hspace{1cm} p_{t(a)}} & & T^*_X \otimes \mathcal{E}_{t(a)} }
		\]
		commutes since $(\mathrm{id}_{T^*_X} \otimes \phi_a) \circ p_{s(a)}= p_{t(a)} \circ \tilde{\phi}_a$. This proves (1) $\Longrightarrow$ (2).
		
		To prove (2) $\Longrightarrow$ (1), assume that $\mathcal{R}$ admits a smooth relative connection $\nabla_0=\{\nabla_i:\mathcal{E}_i \rightarrow T_X^* \otimes \mathcal{E}_i\}_{i\in Q_0}$. Since $\nabla_0$ is a smooth relative connection of quiver bundles, we have $(\mathrm{id}_{T_X^*}\otimes \phi_a) \circ (\nabla_{s(a)} \otimes \nabla_a)=\nabla_{t(a)} \circ \phi_a$ for all $a\in Q_1$. Define a morphism $\gamma:\mathcal{R}\rightarrow J^1\mathcal{R}$ by the collection $\{\gamma_i=(\mathrm{id},\nabla_i)\}_{i\in Q_0}$. Note that each $\gamma_i$ is $\mathcal{C}_X^{\infty}$-linear and $\tilde{\phi}_a \circ (\gamma_{s(a)} \otimes \mathrm{id}_{M_a}) = \gamma_{t(a)} \circ \phi_a$. Hence, $\gamma$ is a morphism of twisted quiver bundles, and it is easy to see that $\beta \circ \gamma = \mathrm{id}_{\mathcal{R}}$.
		
		The equivalence between the statements (1) and (3) is standard. 
	\end{proof}

    \begin{remark}
    \rm Proposition \ref{P: L-map} measures the obstruction to correcting $\nabla_i^0$ so as to obtain smooth relative connection. On the other hand, Proposition \ref{P: At Seq} gives an intrinsic and choice-independent formulation. The obstruction is encoded by the Atiyah class of the $M$-twisted quiver bundle $\mathcal R$, defined as the extension class of the jet sequence \eqref{Q-JetSeq}. The vanishing of this class is equivalent to the existence of a smooth relative connection.
    \end{remark}
	
	One can observe that the notion of a holomorphic relative connection on the holomorphic twisted quiver bundles can be defined analogously to the smooth case. The analogue of Proposition \ref{P: L-map} and Proposition \ref{P: At Seq} can also be extended to a holomorphic setting. Since smooth relative connections may not always exist on quiver bundles, the existence of a holomorphic relative connection on holomorphic quiver bundles may require imposing more constraints on the quiver bundle.  However, the following lemma provides a class of vector bundles over a compact Riemann surface for which a holomorphic connection exists.

	\begin{definition}[{\cite[Definition 2.4]{EH}}]
		\rm
		A holomorphic vector bundle $\E$ on a complex manifold $X$ is called \emph{finite} (or \emph{Weil finite}, see \cite{W}) if there exist
		distinct polynomials $f,g \in \mathbb{N}[t]$, where $\mathbb{N}[t]$ denotes the set of
		polynomials in one variable with non-negative integer coefficients, such that $f(\E) \simeq g(\E)$.
		Here, multiplication is defined by the tensor product and addition by the direct sum.
	\end{definition}

	\begin{lemma}
		Let $\mathcal{R}=(\mathcal{E}, \phi)$ be an $M$-twisted holomorphic $Q$-bundle on a compact Riemann surface $X$. Suppose that each $\mathcal{E}_i$  and $M_a$ are finite bundles for all $i\in Q_0$ and $a\in Q_1$. Then $\mathcal{R}$ admits a holomorphic relative connection.
	\end{lemma}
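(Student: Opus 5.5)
The plan is to use the classical structure theorem for finite bundles on a compact Riemann surface (Weil, and Nori in the algebraic setting; cf.\ \cite{EH}). It says that a holomorphic vector bundle $\E$ on $X$ is finite if and only if there is a finite Galois étale cover $\pi: Y \to X$, with $Y$ a compact connected Riemann surface and Galois group $G$, such that $\pi^*\E$ is holomorphically trivial. Equivalently, $\E \simeq (Y \times V)/G$ for a representation $\rho: G \to \mathrm{GL}(V)$. Since $Q_0$ and $Q_1$ are finite, I would take the compositum of these covers (a connected component of the normalised fibre product, which is again finite, Galois and étale). This gives a single finite Galois étale cover $\pi: Y \to X$ with group $G$ that trivialises every $\E_i$ and every $M_a$ simultaneously. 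So we may write
\[
\pi^*\E_i \simeq Y \times V_i, \qquad \pi^*M_a \simeq Y\times W_a,
\]
with $G$ acting through representations $\rho_i$ and $\sigma_a$ by \emph{constant} matrices.

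On $Y$ we equip every $\pi^*\E_i$ and $\pi^*M_a$ with the trivial connection $d$. The pulled-back morphism $\pi^*\phi_a: Y \times (V_{s(a)} \otimes W_a) \to Y \times V_{t(a)}$ is a holomorphic section of the trivial bundle $\Hom(V_{s(a)}\otimes W_a, V_{t(a)})$ over the compact connected $Y$. Its entries are global holomorphic functions, hence constant, so $\pi^*\phi_a$ is a constant linear map. A constant map is parallel for the trivial connections, which means
\[
d \circ \pi^*\phi_a = (\mathrm{id}\otimes \pi^*\phi_a)\circ (d\otimes \mathrm{id} + \mathrm{id}\otimes d).
\]
Thus the compatibility condition of Definition~\ref{QC} (holomorphic version) holds on $Y$.

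Next we descend to $X$. Since $G$ acts on $Y\times V_i$ by $(y,v)\mapsto (gy, \rho_i(g)v)$ with $\rho_i(g)$ constant, the trivial connection $d$ commutes with the $G$-action, i.e.\ it is $G$-equivariant. Because $\pi$ is a local biholomorphism, $d$ descends to a holomorphic connection $\nabla_i$ on $\E_i = \pi_*(\pi^*\E_i)^G$, and likewise to a connection $\nabla_a$ on $M_a$. Concretely, on a small open $U\subset X$ over which $\pi$ is trivial, we identify $\E_i|_U$ with $\pi^*\E_i$ on one sheet and use $d$ there; equivariance makes this independent of the chosen sheet. The compatibility identity is local, and $\pi$ is a local isomorphism, so it descends verbatim. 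The result is a holomorphic relative connection $(\{\nabla_i\},\{\nabla_a\})$ on $\mathcal{R}$.

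The main obstacle is the first step: invoking the characterisation of finite bundles as bundles trivialised by a finite étale cover, and arranging a \emph{common} Galois cover for all the finitely many bundles. Once everything is trivialised on a compact connected $Y$, the remaining steps are formal, because holomorphic morphisms between trivial bundles are constant there. If a fully self-contained route to the cover is undesired, I would instead cite the Weil/Nori theorem that finite bundles come from representations of a finite quotient of $\pi_1(X)$. That theorem yields flat connections with finite monodromy and with the trivialising cover built in.
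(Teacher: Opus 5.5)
Your proof is correct, but it follows a more explicit route than the paper. The paper invokes the tensor equivalence $F:\mathrm{FConn}(X)\to\mathcal{C}^{\rm N}(X)$ from \cite{EH} and uses three of its properties:
\begin{enumerate}
\item essential surjectivity supplies connections $\nabla_i$ and $\nabla_a$;
\item compatibility of $F$ with tensor products ensures that $(\E_{s(a)}\otimes M_a,\nabla_{s(a)}\otimes\nabla_a)$ is an object of $\mathrm{FConn}(X)$ lying over $\E_{s(a)}\otimes M_a$;
\item full faithfulness of the \emph{forgetful} functor means every bundle map $\phi_a$ is automatically horizontal.
\end{enumerate}

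You instead open up that black box. Your key input is the Weil--Nori structure theorem: finite bundles are exactly those trivialised by a finite Galois \'etale cover. The role of full faithfulness is then played by an elementary fact: holomorphic maps between trivial bundles over a compact connected $Y$ are constant. The same fact justifies your claim that $G$ acts by constant matrices, and hence that $d$ is $G$-equivariant and descends. Passing to one common cover lets you treat all $\E_i$, all $M_a$ and the products $\E_{s(a)}\otimes M_a$ at once. In the paper this bookkeeping is absorbed into the tensor property of $F$.

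The paper's argument is shorter and packages existence of the connections and horizontality of the morphisms into a single categorical statement. Yours makes transparent why the $\phi_a$ are forced to be parallel, and it exhibits the connections as canonical flat connections with finite monodromy. It also confines the nontrivial input to the structure theorem; everything after that is elementary descent.
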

	\begin{proof}
		Let $\mathrm{FConn}(X)$ denote the category of finite bundles equipped with finite connections, and let $\mathcal{C}^{\rm N}(X)$ denote the category of finite bundles. By \cite[Theorem 2.15]{EH}, the forgetful functor 
		$$F:\mathrm{FConn}(X) \longrightarrow \mathcal{C}^{\rm N}(X), \quad (V, \nabla) \mapsto V$$
		is a tensor equivalence of categories. Consequently, for each finite bundle $\E_i$
		and $M_a$, there exist connections $\nabla_{i}$ and $\nabla_{a}$, unique up to isomorphism, such that $F\left((\mathcal{E}_i, \nabla_i)\right)=\mathcal{E}_i$ and $F((M_a, \nabla_a))=M_a$. Moreover, for every $a\in Q_1$, the equivalence yields
		$$\mathrm{Hom}_{\mathrm{FConn}(X)}\left((\mathcal{E}_{s(a)} \otimes M_a, \nabla_{s(a)}\otimes \nabla_a), (\mathcal{E}_{t(a)}, \nabla_{t(a)})\right) \simeq  \mathrm{Hom}_{\mathcal{C}^{\rm N}(X)}\left(\mathcal{E}_{s(a)} \otimes M_a, \mathcal{E}_{t(a)}\right)\, .$$ Hence, each morphism $\phi_{a}: \E_{s(a)} \otimes M_a \rightarrow \E_{t(a)}$ lifts to a morphism compatible with the chosen connections. Therefore, $\mathcal{R}$ admits a holomorphic relative connection.
	\end{proof}
	
\subsection{Bianchi identity for quiver bundle}
	Let $\mathcal{R}=(\mathcal{E},\phi)$ be an $M$-twisted smooth quiver bundle admitting a smooth relative connection $\left( \{\nabla_i\}_{i\in Q_0}, \{\nabla_a\}_{a\in Q_1}\right)$. Thus,  for each $a\in Q_1$, the compatibility condition
	\begin{equation*}
		\nabla_{t(a)} \circ \phi_{a} = (\mathrm{id} \otimes \phi_{a}) \circ (\nabla_{s(a)}\otimes \nabla_{a})
	\end{equation*}
	holds. Let $U\subset X$ be an open set over which every $\mathcal{E}_i$ and $M_a$ are trivial. Let $\{e_1, \ldots ,e_r\}$ and $\{f_1, \ldots ,f_s\}$ be a local frame of $\mathcal{E}_{s(a)} \otimes M_a$, and $\mathcal{E}_{t(a)}$  over $U$, respectively. The morphism $\phi_a:\mathcal{E}_{s(a)} \otimes M_a \rightarrow \mathcal{E}_{t(a)}$ is locally written as 
	$$\phi_a(e_j)= \sum_{i=1}^{s}a_{ij} f_i\,,$$
	and we denote by $A_a=(a_{ij})$ the matrix representation of $\phi_{a}$. Let $\omega^{s(a)\otimes a}$ and $\omega^{t(a)}$ be the connection matrices of $\nabla_{s(a)} \otimes \nabla_a$ and $\nabla_{t(a)}$, respectively. Thus, $$\left(\nabla_{s(a)} \otimes \nabla_a\right)(e)=e\omega^{s(a)\otimes a}, \quad \nabla_{t(a)}(f)=f\omega^{t(a)},$$ 
	where $e=(e_1,\ldots, e_r)$ and $f=(f_1,\ldots, f_s)$. Let $s=\sum_{j=1}^{r} s^{j}e_j = e\cdot \overline{s} \in \Gamma(U, \mathcal{E}_{s(a)} \otimes M_a)$ be a local section, where $\overline{s}=(s^1,\ldots , s^r)^t $ be a column vector. Then,
	\begin{equation*}
		\begin{array}{@{}rl@{}}
			\nabla_{t(a)}(\phi_a(s)) & = f (d(A_a\overline{s})+\omega^{t(a)}A_a\overline{s})\\
			
		\end{array} 
	\end{equation*}
	and
	\begin{equation*}
		\begin{array}{@{}rl@{}}
			(\mathrm{id} \otimes \phi_a)(\nabla_{s(a)} \otimes \nabla_a)(s) & = f(A_ad\overline{s}+A_a\omega^{s(a)\otimes a}\overline{s} )\,.  \\
			
		\end{array} 
	\end{equation*}
	By applying the compatibility condition, we can compare the coefficients
    \begin{equation*}
		\begin{array}{@{}rl@{}}
			d(A_a\overline{s})+\omega^{t(a)}A_a\overline{s} &= A_a(d\overline{s}+\omega^{s(a)\otimes a}\overline{s})\\
			(dA_a)\overline{s}+A_ad\overline{s}+ \omega^{t(a)}A_a\overline{s} &= A_ad\overline{s} + A_a \omega^{s(a)\otimes a}\overline{s}\\
			
		\end{array}
	\end{equation*}
	which gives us:
	$$dA_a + \omega^{t(a)}A_a= A_a \omega^{s(a)\otimes a}\,.$$
	Let $\Omega^{s(a)\otimes a}$ and $\Omega^{t(a)}$ be the curvature matrices of the connections $\nabla_{s(a)}\otimes \nabla_a$ on  $\mathcal{E}_{s(a)}\otimes M_a$ and $\nabla_{t(a)}$ on $\mathcal{E}_{t(a)}$, respectively. It follows that
	$$\Omega^{s(a)\otimes a}= d\omega^{s(a)\otimes a}+ \omega^{s(a)\otimes a}\wedge \omega^{s(a)\otimes a}$$
	$$\Omega^{t(a)}= d\omega^{t(a)}+ \omega^{t(a)}\wedge \omega^{t(a)}$$
	Since $dA_a + \omega^{t(a)}A_a= A_a \omega^{s(a)\otimes a}$, we have
	$$d(dA_a + \omega^{t(a)}A_a - A_a \omega^{s(a)\otimes a})=0$$
	$$d \omega^{t(a)}A_a - \omega^{t(a)} \wedge dA_a -dA_a\wedge \omega^{s(a)\otimes a}-A_ad\omega^{s(a)\otimes a} =0$$
	$$d \omega^{t(a)}A_a - \omega^{t(a)}A_a \wedge \omega^{s(a)\otimes a} + \omega^{t(a)} \wedge \omega^{t(a)}A_a- A_a \omega^{s(a)\otimes a} \wedge \omega^{s(a)\otimes a} + \omega^{t(a)}A_a\wedge \omega^{s(a)\otimes a}- A_a d\omega^{s(a)\otimes a}=0$$
	Hence, we get:
	$$(d \omega^{t(a)} + \omega^{t(a)} \wedge \omega^{t(a)})A_a= A_a(\omega^{s(a)\otimes a} \wedge \omega^{s(a)\otimes a} +  d\omega^{s(a)\otimes a})$$
	$$\Omega^{t(a)}A_a=A_a\Omega^{s(a)\otimes a}$$
	Taking the exterior derivative on both sides, we get,
	$$d\Omega^{t(a)}A_a + \Omega^{t(a)}\wedge dA_a=dA_a \wedge \Omega^{s(a)\otimes a}+A_a d\Omega^{s(a)\otimes a}\,.$$
	By using Bianchi identity for the connections $\nabla_{s(a)}\otimes \nabla_a$ on  $\mathcal{E}_{s(a)}\otimes M_a$ and $\nabla_{t(a)}$ on $\mathcal{E}_{t(a)}$, we get
	$$(\Omega^{t(a)}\wedge \omega^{t(a)}- \omega^{t(a)}\wedge \Omega^{t(a)})A_a +  \Omega^{t(a)}\wedge dA_a = dA_a \wedge \Omega^{s(a)\otimes a}+A_a (\Omega^{s(a)\otimes a}\wedge \omega^{s(a)\otimes a}- \omega^{s(a)\otimes a}\wedge \Omega^{s(a)\otimes a})$$
	for all $a\in Q_1$. This is the Bianchi identity for a quiver bundle, which holds for every arrow $a\in Q_1$.

\section{Smooth relative connection on Tree-type quiver bundles}

In this section, we provide necessary and sufficient conditions for a tree-type smooth twisted quiver bundle over a connected smooth manifold $X$, with trivial twisting, to admit a smooth relative connection. Throughout this section, we assume $M_a$ is a trivial line bundle and a connection $\nabla_a$ is an exterior derivative for $a\in Q_1$. With this assumption and by the identification $\mathcal{E}_{s(a)}\otimes M_a \simeq \mathcal{E}_{s(a)}$, the compatible condition becomes $\nabla_{t(a)}\circ \phi_a = (\mathrm{id}_{T^*_X}\otimes \phi_a)\circ \nabla_{s(a)}$. Hence, we omit the term "twisted" and simply write quiver bundle.

Given a directed path $p=a_{i_k}  \cdots a_{i_1}$ with $a_{i_m}\in Q_1$, we get an associated morphism $\phi_p:=\phi_{a_{i_k}} \circ \cdots \circ \phi_{a_{i_1}}: \mathcal{E}_{s(a_{i_1})} \rightarrow  \mathcal{E}_{t(a_{i_k})}$, we call it a \emph{path morphism}.

\begin{proposition}\label{Constant rank}
If a $Q$-bundle $\mathcal{R} = (\mathcal{E}, \phi)$ admits a smooth relative connection, then all path morphisms $\phi_p$ are of constant rank, where 
$p = a_{i_k} \cdots a_{i_1}$, with $a_{i_m} \in Q_1$, 
is a directed path in the quiver.
\end{proposition}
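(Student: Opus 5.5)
The plan is to reduce to a single morphism and then use parallel transport. First, if $\left(\{\nabla_i\}_{i\in Q_0}\right)$ is a smooth relative connection on $\mathcal{R}$ (with $\nabla_a = d$ on the trivial $M_a$), then every path morphism $\phi_p$ is itself compatible with the connections at its endpoints. For $p = a_{i_k}\cdots a_{i_1}$ we iterate the compatibility condition:
\[
\nabla_{t(p)}\circ \phi_p = (\mathrm{id}_{T^*_X}\otimes \phi_{a_{i_k}})\circ \nabla_{s(a_{i_k})}\circ \phi_{a_{i_{k-1}}}\circ\cdots\circ\phi_{a_{i_1}} = \cdots = (\mathrm{id}_{T^*_X}\otimes \phi_p)\circ \nabla_{s(p)} .
\]
Hence $\phi_p$ is a parallel section of $\Hom(\mathcal{E}_{s(p)},\mathcal{E}_{t(p)})$ for the induced connection. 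It therefore suffices to show that any morphism $\psi:\mathcal{E}\to\mathcal{F}$ satisfying $\nabla^{\mathcal{F}}\circ\psi = (\mathrm{id}\otimes\psi)\circ\nabla^{\mathcal{E}}$ has constant rank on the connected manifold $X$.

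For this, fix $x,y\in X$ and a smooth curve $\gamma:[0,1]\to X$ from $x$ to $y$. Such a curve exists because a connected manifold is path connected, and paths can be taken piecewise smooth. Let $P^{\mathcal{E}}_\gamma:\mathcal{E}_x\to\mathcal{E}_y$ and $P^{\mathcal{F}}_\gamma$ denote parallel transport; these are linear isomorphisms. Given $v\in\mathcal{E}_x$, let $s(t)$ be the parallel section of $\gamma^*\mathcal{E}$ with $s(0)=v$. The compatibility, pulled back along $\gamma$, gives
\[
\nabla^{\mathcal{F}}_{\dot\gamma}(\psi\circ s) = \psi(\nabla^{\mathcal{E}}_{\dot\gamma}s) = 0,
\]
so $\psi\circ s$ is parallel with initial value $\psi_x(v)$. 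By uniqueness of solutions of the parallel transport ODE, $\psi_y\circ P^{\mathcal{E}}_\gamma = P^{\mathcal{F}}_\gamma\circ\psi_x$. Since both transports are isomorphisms, $\mathrm{rank}\,\psi_y=\mathrm{rank}\,\psi_x$.

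The main point needing care is the pullback step: the compatibility is stated for sections over open sets of $X$, while parallel transport uses sections along $\gamma$. I would handle it by working in local frames, as in the Bianchi-identity computation. Writing $dA + \omega^{\mathcal{F}}A = A\,\omega^{\mathcal{E}}$ for the matrix $A$ of $\psi$, the same identity pulls back along $\gamma$. Then for a solution $\bar s$ of $\dot{\bar s} = -\omega^{\mathcal{E}}(\dot\gamma)\bar s$ one computes directly that $A\bar s$ solves the corresponding equation for $\mathcal{F}$, and one patches over finitely many frame charts covering $\gamma([0,1])$.

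Alternatively, the rank can be shown to be locally constant by a purely local argument: over a small ball, use radial parallel transport to build frames of parallel-along-rays sections, in which $\psi$ has constant matrix. Connectedness of $X$ then gives a constant rank.
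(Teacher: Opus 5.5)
Your proof is correct and takes the same approach as the paper. You iterate the arrow-wise compatibility to get compatibility of $\phi_p$, then use parallel transport along a curve from $x$ to $y$ to obtain $\phi_{p,y}\circ P^{\mathcal{E}_{s(p)}}_\gamma = P^{\mathcal{E}_{t(p)}}_\gamma\circ \phi_{p,x}$, which forces equal ranks; your local-frame check that $A\bar s$ solves the parallel-transport equation for the target bundle fills in a step the paper only asserts, and the radial-gauge variant is an optional alternative.
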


\begin{proof}
Let $\mathcal{R} = (\mathcal{E}, \phi)$ be a $Q$-bundle that admits a smooth relative connection. Let $p=a_{i_k} \cdots a_{i_1}$ be any arbitrary path with $a_{i_m}\in Q_1$. It is clear that the connections $\nabla_{s(p)}$ and $\nabla_{t(p)}$ on $\mathcal{E}_{s(p)}$ and $\mathcal{E}_{t(p)}$ are compatible with the path morphism $\phi_p$. 

Let $x, y \in X$, and let $\gamma: [0,1] \to X$ be a smooth curve such that $\gamma(0)=x$ and $\gamma(1)=y$. These connections define parallel transport maps
\[
p_{\gamma}^{\mathcal{E}_{s(p)}}: (\mathcal{E}_{s(p)})_{x} \rightarrow (\mathcal{E}_{s(p)})_{y}
\quad \text{and} \quad
p_{\gamma}^{\mathcal{E}_{t(p)}}: (\mathcal{E}_{t(p)})_{x} \rightarrow (\mathcal{E}_{t(p)})_{y}
\]
along $\gamma$. These are linear isomorphisms between the fibres.

The compatibility of $\phi_p$ with the connections gives
\begin{equation*}
  p_{\gamma}^{\mathcal{E}_{t(p)}} \circ {\phi_p}_{x}
  =
  {\phi_p}_{y} \circ p_{\gamma}^{\mathcal{E}_{s(p)}},
\end{equation*}
where ${\phi_p}_x$ and ${\phi_p}_y$ denote the induced maps on the fibres over $x$ and $y$, respectively. Since $p_{\gamma}^{\mathcal{E}_{s(p)}}$ and $p_{\gamma}^{\mathcal{E}_{t(p)}}$ are isomorphisms, we obtain
\begin{equation*}
{\phi_p}_y
=
p_{\gamma}^{\mathcal{E}_{t(p)}} \circ {\phi_p}_x \circ
\big(p_{\gamma}^{\mathcal{E}_{s(p)}}\big)^{-1}.
\end{equation*}
Hence,
\[
\operatorname{rank}({\phi_p}_x)=\operatorname{rank}({\phi_p}_y)
\]
for all $x,y \in X$.
\end{proof}

\subsection{Smooth relative connections on $\mathbb{A}_n$-type quiver bundles}\label{S:A_n conn}
We introduce the following notation:
\begin{itemize}
    \item $\phi_{(i,j)}:=\phi_i \circ \phi_{i-1}\circ \cdots \circ \phi_j$
    
\end{itemize}

\begin{proposition}\label{T:A_n case}
Let $\mathcal{R}=(\mathcal{E},\phi)$ be an $\mathbb{A}_n$-type quiver bundle with all arrows oriented in the same direction. Then $\mathcal{R}$ admits a smooth relative connection if and only if every path morphism is of constant rank.
  
\end{proposition}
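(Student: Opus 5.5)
The plan is to prove necessity directly from Proposition \ref{Constant rank}, and to prove sufficiency by splitting the quiver bundle into a direct sum of ``interval'' quiver bundles, each of which visibly carries a compatible connection. Label the quiver $1 \to 2 \to \cdots \to n$ with $\phi_i:\mathcal{E}_i\to\mathcal{E}_{i+1}$. Necessity is exactly Proposition \ref{Constant rank}, since every path morphism in $\mathbb{A}_n$ is of the form $\phi_{(k,j)}$.

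For sufficiency, assume every $\phi_{(k,j)}$ has constant rank. Then all kernels $\ker\phi_{(k,j)}\subset\mathcal{E}_j$ and all images $\operatorname{im}\phi_{(k,j)}\subset\mathcal{E}_{k+1}$ are smooth subbundles. I also need their intersections to be subbundles. At a vertex $i$ and for $j\le i\le k$, consider
\[
\operatorname{im}\phi_{(i-1,j)}\cap\ker\phi_{(k,i)}=\phi_{(i-1,j)}\bigl(\ker\phi_{(k,j)}\bigr).
\]
Its rank equals $\operatorname{rank}\ker\phi_{(k,j)}-\operatorname{rank}\ker\phi_{(i-1,j)}$, which is constant. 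So all of the following are subbundles, and the same holds for their sums:
\begin{itemize}
\item the increasing image filtration $\operatorname{im}\phi_{(i-1,1)}\subset\cdots\subset\operatorname{im}\phi_{i-1}\subset\mathcal{E}_i$;
\item the increasing kernel filtration $\ker\phi_i\subset\ker\phi_{(i+1,i)}\subset\cdots\subset\mathcal{E}_i$;
\item all intersections of a member of the first filtration with a member of the second.
\end{itemize}

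The key step is to construct smooth subbundles $V^{i}_{[j,k]}\subset\mathcal{E}_i$ for $j\le i\le k$ with three properties:
\begin{itemize}
\item $\mathcal{E}_i=\bigoplus_{j\le i\le k}V^i_{[j,k]}$;
\item $\phi_i$ maps $V^i_{[j,k]}$ isomorphically onto $V^{i+1}_{[j,k]}$ when $k>i$;
\item $\phi_i$ kills $V^i_{[i,k]}$ when $k=i$, i.e.\ on $V^i_{[j,i]}$.
\end{itemize}
This is the fibrewise interval decomposition of an $\mathbb{A}_n$-representation, made global. I would build it inductively from vertex $1$ to vertex $n$, using a fixed Riemannian metric on each $\mathcal{E}_i$ to select orthogonal complements smoothly. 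At vertex $i$, set $V^i_{[j,k]}=\phi_{i-1}(V^{i-1}_{[j,k]})$ for $j<i\le k$; these are already given. For the new summands $V^i_{[i,k]}$, let $F_k$ denote $\ker\phi_{(k,i)}$ (with $F_{i-1}=0$ and $F_n=\mathcal{E}_i$), and take
\[
V^i_{[i,k]}:=\text{the orthogonal complement of }\bigl(F_{k-1}+\operatorname{im}\phi_{i-1}\bigr)\cap F_k\ \text{inside}\ F_k .
\]

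One must then check two things. First, the images of the old summands are compatible with the kernel filtration, that is, $\bigoplus_{j<i,\,k'\le k}V^i_{[j,k']}=\operatorname{im}\phi_{i-1}\cap F_k$. Second, everything sums directly to $\mathcal{E}_i$. Both are fibrewise linear-algebra statements, and the preceding rank computation guarantees that every bundle involved has constant rank, hence is smooth. I expect this bookkeeping — getting the inductive choice of complements right so that directness and the isomorphism property propagate — to be the main obstacle. I would first verify it pointwise by a dimension count and then argue smoothness from constant rank.

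Given the decomposition, choose for each interval $[j,k]$ an arbitrary connection $\nabla^{[j,k]}$ on $V^j_{[j,k]}$ at its starting vertex. Transport it to $V^i_{[j,k]}$ via the isomorphism $\phi_{(i-1,j)}$, i.e.\ $\nabla(\phi_{(i-1,j)}s):=(\mathrm{id}\otimes\phi_{(i-1,j)})\nabla^{[j,k]}s$. Finally set $\nabla_i:=\bigoplus_{j\le i\le k}\nabla^{[j,k]}$ on $\mathcal{E}_i$. The compatibility $\nabla_{i+1}\circ\phi_i=(\mathrm{id}\otimes\phi_i)\circ\nabla_i$ is checked summand by summand. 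On $V^i_{[j,k]}$ with $k>i$ it holds by construction of the transported connections. On $V^i_{[j,i]}$ both sides vanish, because $\phi_i$ is zero there and $\nabla_i$ preserves that summand. This yields a smooth relative connection.
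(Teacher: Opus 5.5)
Your proposal is correct and follows the same strategy as the paper. Both split each $\mathcal{E}_i$ into the $i(n-i+1)$ pieces $V^i_{[j,k]}$ cut out by the kernel and image filtrations of the path morphisms, transport a connection chosen at vertex $j$ along the isomorphisms induced by the $\phi$'s, and use that $\phi_i$ kills the pieces ending at $i$. Your inductive choice (images of the previous summands, plus complements of $(F_{k-1}+\im\phi_{i-1})\cap F_k$ in $F_k$) is exactly what makes the bookkeeping you deferred go through: induction on $k$, using the kernel-filtration property at vertex $i-1$, gives $F_k=\bigoplus_{j\le i\le k'\le k}V^i_{[j,k']}$. This is more careful than the paper, which intersects independently chosen complements, and in general that need not give a direct-sum splitting compatible with $\phi_{i-1}$.
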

\begin{proof}
   Consider the following $\mathbb{A}_n$-type quiver bundle:
\[
\xymatrix{
\E_1  \ar[r]^{\phi_1} & \E_2 \ar[r]^{\phi_2} & \cdots \ar[r]^{\phi_{n-2}} & \E_{n-1} \ar[r]^{\phi_{n-1}} & \E_n
}
\]
Suppose that every path morphism in the above quiver bundle has constant rank.

Since each path morphism is of constant rank, its kernel and image are smooth subbundles. Hence, we decompose the bundles $\E_1, \E_2, \ldots , \E_n$ using the kernels and images of the path morphisms. This process yields, for each component in the decomposition, either an isomorphism onto its image or the zero map, where the image is a direct summand in the decomposition of the target bundle.
 If the induced map is an isomorphism, we choose a smooth connection on one component and transport it to the other.
  If a component is not isomorphic to any other component, we choose an arbitrary smooth connection on it.

The constant rank condition gives a filtration
\[
\ker(\phi_1)
\subseteq
\ker(\phi_{(2,1)})
\subseteq
\cdots
\subseteq
\ker(\phi_{(n-1,1)})
\subseteq
\E_1.
\]
Choose smooth complementary subbundles
$\E_1^{(0)}, \E_1^{(1)}, \dots, \E_1^{(n-1)} \subset \E_1$
such that
\[
\ker(\phi_1) = \E_1^{(0)}, \quad \ker(\phi_{(k+1,1)}) = \ker(\phi_{(k,1)}) \oplus \E_1^{(k)}
\quad \text{for } 1 \le k < n-1,
\]
and
\[
\E_1 = \ker(\phi_{(n-1,1)}) \oplus \E_1^{(n-1)}.
\]
Thus,
\[
\E_1 = \bigoplus_{k=0}^{n-1} \E_1^{(k)}.
\]

Similarly, the constant rank condition yields a filtration
\[
\ker(\phi_2)
\subseteq
\ker(\phi_{(3,2)})
\subseteq
\cdots
\subseteq
\ker(\phi_{(n-1,2)})
\subseteq
\E_2.
\]
Choose smooth complementary subbundles
$\E_2^{(0)}, \dots, \E_2^{(n-2)} \subset \E_2$
giving
\[
\E_2 = \bigoplus_{k=0}^{n-2} \E_2^{(k)}.
\]

Since $\phi_1$ is of
constant rank, $\im(\phi_1)$ is a smooth subbundle of $\E_2$. 
Choose a smooth complementary subbundle $C_2$ such that
\[
\E_2 = \im(\phi_1) \oplus C_2.
\]

We refine the above kernel decomposition of $\E_2$ by intersecting each summand with $\im(\phi_1)$ and with $C_2$. 
Thus each summand $\E_2^{(k)}$ decomposes as
\[
\E_2^{(k)}
=
\big(\E_2^{(k)} \cap \im(\phi_1)\big)
\oplus
\big(\E_2^{(k)} \cap C_2\big).
\]
Thus, we have
$$\E_2 = \left( \bigoplus_{k=0}^{n-2} (\E_2^{(k)} \cap \im(\phi_1)) \right) \oplus \left( \bigoplus_{k=0}^{n-2} (\E_2^{(k)} \cap C_2) \right) $$

For each summand in the decomposition of $\E_1$, the induced map under $\phi_1$ is either zero or an isomorphism onto its image, and this image is a direct summand in the decomposition of $\E_2$.

Observe that the last $n-1$ components in the decomposition of $\E_1$ are isomorphic, via $\phi_1$, to the first $n-1$ components in the decomposition of $\E_2$.
 A smooth connection chosen on the last $n-1$ components of $\E_1$ induces a compatible smooth connection on the first $n-1$ components of $\E_2$.
 Choose a smooth connection on $\ker~\phi_1$ and on the last $n-1$ components of $\E_2$, and by adding the respective connections, give compatible smooth connections on $\E_1$ and $\E_2$.

Continuing this procedure for each $\E_i$ and decomposing it into $i(n-i+1)$ components, we obtain a smooth relative connection on the $\mathbb{A}_n$-type quiver bundle with all arrows oriented in the same direction.

The converse follows from Proposition \ref{Constant rank}.
\end{proof}

\subsection{Smooth relative connections on tree-type quiver bundles}\label{tree conn}

Consider an undirected tree with finitely many vertices. Fix a vertex $r$ in this tree and orient all arrows away from $r$. Then $r$ is the unique vertex with no incoming arrows, and every vertex $v \neq r$ has exactly one incoming arrow. There exists a unique directed path from $r$ to $v$. We call such an oriented tree a \emph{rooted directed tree with root $r$}.
A vertex $v \in Q_0$ is called a \emph{leaf} if it has no outgoing arrows.

Let $Q$ be a quiver that is a rooted directed tree with root $r$, and consider a $Q$-bundle $\mathcal{R}=(\mathcal{E},\phi)$. For every leaf $v \in Q_0$, there is a unique path $p_v$ from $r$ to $v$. This path is of type $\mathbb{A}_{n+1}$, where $n$ is the number of arrows in $p_v$.

Fix a leaf $v$, and let
\[
p_v = a_n \cdots a_1
\]
be the unique path from $r$ to $v$. For each $k = 1, \dots, n$, denote by
\[
p_k = a_k \cdots a_1
\]
the initial subpath of length $k$, and write $\phi_{p_k}$ for the associated path morphism. Then the kernels form an increasing sequence of smooth subbundles
\[
\ker(\phi_{p_1}) \subseteq \ker(\phi_{p_2}) \subseteq \cdots \subseteq \ker(\phi_{p_n}) \subseteq \mathcal{E}_r.
\]
Choose smooth complementary subbundles representing the successive quotients. More precisely, choose subbundles
\[
\mathcal{E}_r^{(0)}, \dots, \mathcal{E}_r^{(n)} \subset \mathcal{E}_r
\]
such that
\[
\ker(\phi_{p_1}) = \mathcal{E}_r^{(0)},
\]
\[
\ker(\phi_{p_{k+1}})
=
\ker(\phi_{p_k})
\oplus
\mathcal{E}_r^{(k)}
\quad \text{for } 1 \le k < n,
\]
and
\[
\mathcal{E}_r
=
\ker(\phi_{p_n})
\oplus
\mathcal{E}_r^{(n)}.
\]
This yields a direct sum decomposition
\[
\mathcal{E}_r
=
\bigoplus_{k=0}^{n}
\mathcal{E}_r^{(k)}.
\]
With the help of this root bundle decomposition, we obtain the following theorem.

\begin{proposition}\label{tree}
Let $Q$ be a rooted directed tree with root $r$. A $Q$-bundle $\mathcal{R} = (\mathcal{E}, \phi)$ admits a smooth relative connection if and only if every path morphism $\phi_p$ in $Q$ is of constant rank.
\end{proposition}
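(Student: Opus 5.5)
Necessity is immediate from Proposition~\ref{Constant rank}: if $\mathcal{R}$ admits a smooth relative connection, every path morphism $\phi_p$ has constant rank. The work is in sufficiency. I would generalise the proof of Proposition~\ref{T:A_n case}, using the root decomposition $\mathcal{E}_r=\bigoplus_k \mathcal{E}_r^{(k)}$ constructed above, and build the connections \emph{from the root outward}, processing vertices in an order compatible with the orientation.

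\textbf{Step 1: reduce compatibility to invariance of subbundles.} For a constant-rank morphism $\phi_a\colon \mathcal{E}_u\to\mathcal{E}_v$, a connection $\nabla_u$ descends through $\phi_a$ exactly when $\nabla_u$ maps sections of $\ker\phi_a$ into $T_X^*\otimes\ker\phi_a$. In that case $\nabla_u$ induces a connection on $\mathcal{E}_u/\ker\phi_a\cong \operatorname{im}\phi_a$. Compatibility then forces $\nabla_v$ to equal this induced connection on $\operatorname{im}\phi_a$ and to preserve $\operatorname{im}\phi_a$. On a chosen complement $C_v$ of $\operatorname{im}\phi_a$, the connection $\nabla_v$ is free, up to how it treats further kernels. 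This is the mechanism already used in the $\mathbb{A}_n$ case.

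\textbf{Step 2: set up the inductive invariant.} I would carry the following invariant. For each processed vertex $v$, the connection $\nabla_v$ preserves $\ker\phi_q$ for every directed path $q$ starting at $v$.

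\textbf{Step 3: propagate the invariant across an arrow.} Suppose $a\colon u\to v$ is an arrow with $u$ already processed. Every path starting at $v$ is of the form $q$, and then $qa$ is a path starting at $u$ with
\[
\ker\phi_{qa}=\phi_a^{-1}(\ker\phi_q)\supseteq\ker\phi_a .
\]
Hence $\ker\phi_{qa}/\ker\phi_a\cong\operatorname{im}\phi_a\cap\ker\phi_q$. Because $\nabla_u$ preserves $\ker\phi_{qa}$, the induced connection on $\operatorname{im}\phi_a$ automatically preserves $\operatorname{im}\phi_a\cap\ker\phi_q$. It then remains to extend $\nabla_v$ to a complement of $\operatorname{im}\phi_a$ so that all the $\ker\phi_q$ are preserved. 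Constant rank of $\phi_q$ and of $\phi_{qa}$ makes each $\operatorname{im}\phi_a\cap\ker\phi_q$ a smooth subbundle, so the complement can be chosen adapted to these subbundles.

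\textbf{Main obstacle: the root and branch vertices.} At a single branch (a chain leading to one leaf) the relevant kernels are nested, so one splitting as above suffices. At a vertex $w$ with several outgoing arrows, however, $\nabla_w$ must preserve the kernels $\ker\phi_q$ coming from different branches \emph{simultaneously}, and these are not nested. A connection preserving two subbundles preserves their intersection. By parallel transport, as in Proposition~\ref{Constant rank}, the intersection must then have constant rank.

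This does not seem to follow from the constant-rank hypothesis on individual paths. Take $r$ with two arrows to $v_1,v_2$ over $X=\mathbb{R}$, with $\mathcal{E}_r=X\times\mathbb{R}^2$ and $\phi_1,\phi_2$ rank-one maps whose kernels are two lines that coincide only at $x=0$. Each path morphism has constant rank, yet $\ker\phi_1\cap\ker\phi_2$ jumps in rank at $x=0$, so no relative connection exists. The statement therefore appears to be false as written.

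Accordingly, I would first try to add the hypothesis that, at each vertex, the lattice generated by the kernels $\ker\phi_q$ of outgoing paths consists of constant-rank subbundles that admit a simultaneously adapted direct-sum splitting. I would then check the authors' argument to see whether such a condition is implicitly used in the claimed root decomposition.
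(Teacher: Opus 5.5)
Your conclusion is correct: the statement is false, so the gap is in the paper's proof, not in your argument. Necessity via Proposition~\ref{Constant rank} is exactly what the paper does. Your counterexample to sufficiency is valid. Concretely, let $\mathcal{E}_{v_1}=\mathcal{E}_{v_2}=X\times\mathbb{R}$, $\phi_1(s_1,s_2)=s_2$ and $\phi_2(s_1,s_2)=xs_1-s_2$. Both maps have rank one at every point, with $\ker\phi_1=\langle e_1\rangle$ and $\ker\phi_2=\langle e_1+xe_2\rangle$. Suppose a relative connection existed, and let $P^{i}_\gamma$ denote parallel transport in $\mathcal{E}_i$ along a curve $\gamma$ from $x$ to $y$. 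Compatibility gives $P^{v_i}_\gamma\circ(\phi_i)_x=(\phi_i)_y\circ P^{r}_\gamma$. Hence $P^{r}_\gamma$ maps $\ker(\phi_1)_x\cap\ker(\phi_2)_x$ isomorphically onto $\ker(\phi_1)_y\cap\ker(\phi_2)_y$, which is impossible for $x=0\neq y$. So Proposition~\ref{tree} fails as stated. The closing theorem of Section~4 fails on the same $\mathcal{R}$, with no arrows reversed.

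The paper's proof breaks exactly where you located the obstacle. It splits the kernel filtration of $\mathcal{E}_r$ towards each leaf independently, and then asserts $\mathcal{E}_r=\bigoplus_{(m_1,\dots,m_l)}\bigl(\mathcal{E}_r^{(v_1,m_1)}\cap\cdots\cap\mathcal{E}_r^{(v_l,m_l)}\bigr)$. Intersections of summands of unrelated direct-sum decompositions need neither span nor have constant rank. In your example the summand $\mathcal{E}_r^{(v_1,0)}\cap\mathcal{E}_r^{(v_2,0)}=\ker\phi_1\cap\ker\phi_2$ already jumps at $x=0$. Everything afterwards rests on this unproved decomposition: the claim that $\phi_a$ restricted to each summand is zero or an isomorphism onto a sum of summands, and the analogous refinement by the $J_u^{(j)}$ at non-root vertices. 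Along a single chain the problem disappears. There the kernels are nested, and $\operatorname{rk}(\operatorname{im}\phi_p\cap\ker\phi_q)=\operatorname{rk}\phi_p-\operatorname{rk}\phi_{qp}$ is fixed by the path ranks. That is why your Step~3 works in that case. It is also why the $\mathbb{A}_n$ proposition survives: its proof likewise intersects with arbitrary complements, but it can be repaired by choosing complements adapted to both the image and the kernel filtrations.

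One caution about your proposed repair: conditions on the kernel lattices alone still do not suffice. At a non-root branch vertex, the incoming image interacts with sums of kernels. Over $X=\mathbb{R}$, take arrows $a\colon u\to v$ and $b_i\colon v\to w_i$ ($i=1,2$), with trivial bundles of ranks $1,3,2,2$ at $u,v,w_1,w_2$. Set $\phi_a(t)=t(1,1,x)$, $\phi_{b_1}(y)=(y_2,y_3)$ and $\phi_{b_2}(y)=(y_1,y_3)$. Every path morphism has constant rank, and every kernel lattice consists of constant coordinate subspaces. Yet $\operatorname{im}\phi_a\cap(\ker\phi_{b_1}+\ker\phi_{b_2})$ is a line at $x=0$ and zero elsewhere, although parallel transport in $\mathcal{E}_v$ would have to preserve it.

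What parallel transport really forces is that all fibres $\mathcal{R}_x$ are isomorphic as representations of $Q$. For equioriented $\mathbb{A}_n$ this is equivalent to constant path ranks; for branching trees it is not implied by them. A corrected statement should be phrased in terms of this invariant. Note also that $\sum_\lambda\rho_\lambda\nabla^\lambda$ stays compatible for any partition of unity $\{\rho_\lambda\}$, so existence of a relative connection is a purely local question.
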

\begin{proof}
Suppose that, for every path $p$ in $Q$, the associated morphism $\phi_p$ is of constant rank. Let $v_1, \dots, v_l$ be the leaves of $Q$. 

For each leaf $v_i$, let $p_{v_i} = a_{n_i i} \cdots a_{1 i}$
be the unique path from $r$ to $v_i$. 
For $1 \le k \le n_i$, denote by
$p_{v_i}^{(k)} := a_{k i} \cdots  a_{1 i}$
the initial subpath of length $k$.
As in the $\mathbb{A}_n$ case, the constant rank condition gives a filtration by smooth subbundles
\[
\ker(\phi_{p_{v_i}^{(1)}})
\subseteq
\ker(\phi_{p_{v_i}^{(2)}})
\subseteq
\cdots
\subseteq
\ker(\phi_{p_{v_i}^{(n_i)}}) \subseteq
\mathcal{E}_r\,.
\]

Choose smooth complementary subbundles representing the successive quotients. 
More precisely, choose subbundles
\[
\mathcal{E}_r^{(v_i,0)}, \dots, \mathcal{E}_r^{(v_i,n_i)} \subset \mathcal{E}_r
\]
such that
\[
\ker(\phi_{p_{v_i}^{(1)}}) = \mathcal{E}_r^{(v_i,0)},
\]
\[
\ker(\phi_{p_{v_i}^{(k+1)}})
=
\ker(\phi_{p_{v_i}^{(k)}})
\oplus
\mathcal{E}_r^{(v_i,k)}
\quad \text{for } 1 \le k < n_i,
\]
and
\[
\mathcal{E}_r
=
\ker(\phi_{p_{v_i}^{(n_i)}})
\oplus
\mathcal{E}_r^{(v_i,n_i)}.
\]
This yields a direct sum decomposition
\[
\mathcal{E}_r
=
\bigoplus_{m_i=0}^{n_i}
\mathcal{E}_r^{(v_i,m_i)}.
\]
We then define the final decomposition of $\mathcal{E}_r$ by
\[
\mathcal{E}_r
=
\bigoplus_{(m_1,\dots,m_l)}
\left(
\mathcal{E}_r^{(v_1,m_1)}
\cap
\cdots
\cap
\mathcal{E}_r^{(v_l,m_l)}
\right),
\]
where $0 \le m_i \le n_i$ for each $i$.

Now fix a vertex $u \neq r$. Since $Q$ is a tree, there is a unique path $p_u$ from $r$ to $u$. Write
$p_u = a_q \cdots a_1.$
Then $\im(\phi_{p_u}) \subseteq \mathcal{E}_u$.
For each $j = 1, \dots, q$, define the intermediate subpaths ending at $u$ by
\[
p_u^{(j)} := a_q \cdots a_j,
\]
and set
\[
I_u^{(j)} := \im(\phi_{a_q} \circ \cdots \circ \phi_{a_j})
\subseteq \mathcal{E}_u.
\]
These are precisely the images of the path morphisms corresponding to subpaths of $p_u$ that end at $u$. Since all path morphisms have constant rank, each $I_u^{(j)}$ is a smooth subbundle of $\mathcal{E}_u$, and the inclusions
\[
I_u^{(1)} \subseteq \cdots \subseteq I_u^{(q)} \subseteq \mathcal{E}_u
\]
form a filtration by subbundles. Choose smooth complementary subbundles
\[
J_u^{(0)} \subseteq I_u^{(1)}, \qquad
J_u^{(j)} \subseteq I_u^{(j+1)} \ \text{for } 1 \le j < q, \qquad \text{and} \qquad J_u^{(q)} \subseteq \mathcal{E}_u,
\]
such that
\[
I_u^{(1)} = J_u^{(0)}, \qquad
I_u^{(j+1)} = I_u^{(j)} \oplus J_u^{(j)} \ \text{for } 1 \le j < q, \qquad \text{and} \qquad
\mathcal{E}_u = I_u^{(q)} \oplus J_u^{(q)}.
\]
This yields a decomposition
\[
\mathcal{E}_u
=
\bigoplus_{j=0}^{q}
J_u^{(j)}.
\]

Now let $w_1, \dots, w_s$ be the leaves of the rooted directed subtree with root $u$. We proceed as in the root case. For each leaf $w_i$, let
$p_{w_i} = a_{e_i i} \cdots a_{1 i}$
be the unique path from $u$ to $w_i$. 
For $1 \le k \le e_i$, denote by
\[
p_{w_i}^{(k)} := a_{k i} \cdots a_{1 i}
\]
the initial subpath of length $k$.

Since all path morphisms have constant rank, the kernels
\[
\ker(\phi_{p_{w_i}^{(1)}})
\subseteq
\cdots
\subseteq
\ker(\phi_{p_{w_i}^{(e_i)}})
\]
form a filtration by subbundles of $\mathcal{E}_u$. Choose smooth splittings of this filtration and let 
$\mathcal{E}_u^{(w_i,k)}$ be complementary subbundles representing the successive quotients.
\[
\mathcal{E}_u
=
\bigoplus_{k=0}^{e_i}
\mathcal{E}_u^{(w_i,k)}.
\]
Taking intersections over all leaves of the subtree, we obtain
\[
\mathcal{E}_u
=
\bigoplus_{(k_1,\dots,k_s)}
\left(
\mathcal{E}_u^{(w_1,k_1)}
\cap
\cdots
\cap
\mathcal{E}_u^{(w_s,k_s)}
\right).
\]

Finally, refining this decomposition with the image filtration
\[
I_u^{(1)} \subseteq \cdots \subseteq I_u^{(q)} \subseteq \mathcal{E}_u,
\]
we obtain the final decomposition
\[
\mathcal{E}_u
=
\bigoplus_{j=0}^{q}
\;
\bigoplus_{(k_1,\dots,k_s)}
\left(
\mathcal{E}_u^{(w_1,k_1)}
\cap
\cdots
\cap
\mathcal{E}_u^{(w_s,k_s)}
\cap
J_u^{(j)}
\right).
\]

We now construct a smooth relative connection on the $Q$-bundle.

At the root $r$, consider the refined decomposition
\[
\mathcal{E}_r
=
\bigoplus_{(m_1,\dots,m_l)}
\left(
\mathcal{E}_r^{(v_1,m_1)}
\cap
\cdots
\cap
\mathcal{E}_r^{(v_l,m_l)}
\right).
\]
Choose an arbitrary smooth connection on each summand and denote the
resulting connection on $\mathcal{E}_r$ by $\nabla_{\mathcal{E}_r}$.

Since $Q$ is a rooted directed tree with root $r$, every vertex $v \neq r$ has a unique
incoming arrow. We now define the connection on each vertex by
moving outward from the root along the arrows.

Let $a : u \to v$ be an arrow such that a connection has already
been defined on $\mathcal{E}_u$. Write the refined decomposition of $\mathcal{E}_u$ as
\[
\mathcal{E}_u
=
\bigoplus_{j=0}^{q}
\;
\bigoplus_{(k_1,\dots,k_s)}
\left(
\mathcal{E}_u^{(w_1,k_1)}
\cap
\cdots
\cap
\mathcal{E}_u^{(w_s,k_s)}
\cap
J_u^{(j)}
\right),
\]
where the summands are those obtained from the intersection of
the kernel and image filtrations.

For each such summand
\[
\mathcal{S}_u
=
\left(
\mathcal{E}_u^{(w_1,k_1)}
\cap
\cdots
\cap
\mathcal{E}_u^{(w_s,k_s)}
\cap
J_u^{(j)}
\right),
\]
the restriction
\[
\phi_a\big|_{\mathcal{S}_u}
:
\mathcal{S}_u
\longrightarrow
\mathcal{E}_v
\]
is either zero or an isomorphism onto its image, and this image
is a direct sum of certain summands in the corresponding
decomposition of $\mathcal{E}_v$.

If $\phi_a|_{\mathcal{S}_u} = 0$, we choose an arbitrary smooth
connection on the corresponding summands of $\mathcal{E}_v$. If $\phi_a|_{\mathcal{S}_u}
:
\mathcal{S}_u
\longrightarrow
\phi_a(\mathcal{S}_u)$ is an isomorphism, we define the connection on
$\phi_a(\mathcal{S}_u)$ by :
\[
\nabla_{\phi_a(\mathcal{S}_u)}
=
(\mathrm{id}_{T_X^*}\otimes \phi_a) \circ \nabla_{\mathcal{S}_u} \circ \phi_a^{-1}.
\]
Taking the direct sum over all summands defines a smooth
connection on $\mathcal{E}_v$.

Proceeding along all arrows starting from the root,
this defines smooth connections on every vertex of $Q$.
Since each vertex has a unique incoming arrow,
the construction is well defined.

By construction, for every arrow $a : u \to v$ we have
\[
\nabla_{\mathcal{E}_v} \circ \phi_a
=
(\mathrm{id}_{T^*_X} \otimes \phi_a)
\circ
\nabla_{\mathcal{E}_u}.
\]
Hence, the $Q$-bundle admits a smooth relative connection.
\end{proof}

Up to this point, we have constructed a smooth relative connection on rooted directed tree-type quiver bundles under the assumption that each path morphism has constant rank. A natural question arises: what happens when the arrows are not uniformly oriented? This question is addressed in Proposition~\ref{P:Conn-Q^a}, which asserts that if a quiver bundle admits a smooth relative connection, then one may reverse the orientation of any arrow. The resulting quiver bundle still admits a smooth relative connection.

Let $Q=(Q_0, Q_1)$ be a quiver, and let $a\in Q_1$. Define the quiver $Q^a$ by reversing the arrow $a$ and keeping the other arrow and vertices unchanged. For example, if $Q$ is given as 
\[
\xymatrix{
1 \ar[r]  & 2  & 3 \ar[l] \ar[r]^{a} & 4 \ar[r] & 5  \\
& & &  6 \ar[u] & }
\]
Then, the $Q^a$ is 
\[
\xymatrix{
1 \ar[r]  & 2  & 3 \ar[l]  & 4 \ar[l]_{a} \ar[r] & 5  \\
& & &  6 \ar[u] & }
\]

If $\mathcal{R}=(\mathcal{E},\phi)$ is a $Q$-bundle and $\phi_a:\mathcal{E}_{s(a)} \rightarrow \mathcal{E}_{t(a)}$ is a constant rank morphism, then one can define the corresponding $Q^a$-bundle $\mathcal{R}^a$ as follows: 

Since $\phi_a$ has constant rank, its kernel and image are smooth subbundles. 
Choose smooth complementary subbundles 
$F_{s(a)} \subset \mathcal{E}_{s(a)}$ and 
$F_{t(a)} \subset \mathcal{E}_{t(a)}$ such that
\[
\mathcal{E}_{s(a)} = \ker\phi_a \oplus F_{s(a)},
\qquad
\mathcal{E}_{t(a)} = \im\phi_a \oplus F_{t(a)}.
\]
 Further, $\phi_a$ induces the isomorphism $\tilde{\phi_a}:  F_{s(a)}\rightarrow \im~\phi_a$. The $Q^a$-bundle $\mathcal{R}^a$ is defined by taking the same bundles at all vertices and all arrows except $\phi_a$, replace $\phi_a$ by the morphism $\imath \circ \tilde{\phi_a}^{-1} \circ p: \mathcal{E}_{t(a)} \rightarrow \mathcal{E}_{s(a)}$, where $p: \mathcal{E}_{t(a)} \rightarrow \im~\phi_a$ and $\imath:  F_{s(a)} \rightarrow \mathcal{E}_{s(a)}$ denote the projection and inclusion map, respectively. 

\begin{proposition}\label{P:Conn-Q^a}
    Let $\mathcal{R}=(\mathcal{E},\phi)$ be a $Q$-bundle, and let $a\in Q_1$. Then, the following are equivalent:
    \begin{enumerate}
        \item The $Q$-bundle $\mathcal{R}$ admits a smooth relative connection.
        \item The morphism $\phi_a : \mathcal{E}_{s(a)}\rightarrow \mathcal{E}_{t(a)}$ is of constant rank, and the corresponding $Q^a$-bundle $\mathcal{R}^a$ admits a smooth relative connection.
    \end{enumerate}
\end{proposition}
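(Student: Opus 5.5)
The plan is to prove (1)$\Rightarrow$(2) directly and then to get (2)$\Rightarrow$(1) by applying (1)$\Rightarrow$(2) to $\mathcal{R}^a$ itself, using that reversal is an involution for suitable choices of complements.

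\textbf{Step 1: the symmetry.} Put $\psi_a:=\imath\circ\tilde{\phi_a}^{-1}\circ p$. Then $\ker\psi_a=F_{t(a)}$ and $\im\psi_a=F_{s(a)}$, so $\psi_a$ has constant rank. Reverse $a$ in $Q^a$, choosing the complements $\im\phi_a\subset\mathcal{E}_{t(a)}$ and $\ker\phi_a\subset\mathcal{E}_{s(a)}$. The reversed morphism of $\psi_a$ is then exactly $\phi_a$, so $(\mathcal{R}^a)^a=\mathcal{R}$. Hence once (1)$\Rightarrow$(2) is known for every quiver bundle and every admissible choice of complements, the implication (2)$\Rightarrow$(1) follows by applying it to $\mathcal{R}^a$.

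\textbf{Step 2: the easy half of (1)$\Rightarrow$(2).} Assume $\mathcal{R}$ carries a smooth relative connection $\{\nabla_i\}$. Proposition \ref{Constant rank}, applied to the path $a$ of length one, gives that $\phi_a$ has constant rank. Since $\nabla_{t(a)}\circ\phi_a=(\mathrm{id}\otimes\phi_a)\circ\nabla_{s(a)}$, the subbundles $\ker\phi_a$ and $\im\phi_a$ are $\nabla$-invariant. Moreover, $\phi_a$ intertwines the connections induced on $\mathcal{E}_{s(a)}/\ker\phi_a$ and on $\im\phi_a$.

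\textbf{Step 3: the main obstacle.} We must produce a relative connection on $\mathcal{R}^a$. Keep $\nabla_i$ at every vertex other than $s(a),t(a)$, so every arrow not touching $s(a),t(a)$ stays compatible. Parallelism of $\psi_a$ requires $F_{s(a)}$ and $F_{t(a)}$ to be invariant as well, but the chosen complements need not be $\nabla$-parallel. The defect is measured by the second fundamental forms
\[
\sigma_{s}\in \mathrm{Hom}(F_{s(a)},T_X^*\otimes\ker\phi_a),\qquad \sigma_{t}\in \mathrm{Hom}(F_{t(a)},T_X^*\otimes\im\phi_a).
\]
I would first try to correct $\nabla$ by an element $\{\alpha_i\}$ of the affine space of the Remark following the definition of $\mathcal{R}^*$. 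This means finding $\{\alpha_i\}$ in the kernel of $L_{\nabla_1}$ for all arrows other than $a$, with $\alpha_{s(a)}$ and $\alpha_{t(a)}$ chosen so that the new connections have $F_{s(a)}$ and $F_{t(a)}$ invariant. The correction should be block off-diagonal: $\alpha_{s(a)}=-\sigma_s$ and $\alpha_{t(a)}=-\sigma_t$. One then needs $\sigma_t\circ\phi_a$ to be matched appropriately, and the corrections to be propagated, or shown harmless, along the other arrows at $s(a)$ and $t(a)$.

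If this propagation cannot be controlled in general, the fallback is to give up the freedom in $F$. One would then show that $\mathcal{R}^a$ admits a connection for \emph{some} admissible complements, and argue that different choices give isomorphic $Q^a$-bundles via automorphisms of $\mathcal{E}_{s(a)},\mathcal{E}_{t(a)}$ that are unipotent with respect to the kernel/image splittings. Relative connections transport along such isomorphisms. In particular, this reduces the problem to choosing complements that are already $\nabla$-parallel wherever they exist. I expect this step, making the complements parallel compatibly with the remaining arrows, to be where the real difficulty lies.

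Once the new connections make $F_{s(a)},F_{t(a)},\ker\phi_a,\im\phi_a$ all parallel, the identity $\tilde{\phi_a}\circ\nabla_{F_{s(a)}}=\nabla_{\im\phi_a}\circ\tilde{\phi_a}$ inverts to give parallelism of $\tilde{\phi_a}^{-1}$. Since $p$ and $\imath$ are then parallel as well, $\psi_a$ is compatible with the connections, which completes (1)$\Rightarrow$(2).
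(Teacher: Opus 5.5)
Your Steps 1 and 2 are correct, and so is the closing inversion argument. The proposal still has a genuine gap, exactly where you place it: Step 3 never produces a relative connection on $\mathcal{R}^a$. Neither of your two routes can be completed, because for an arbitrary choice of complements the implication (1)$\Rightarrow$(2) is false.

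Take $X=\mathbb{R}$ and $Q\colon 1\xrightarrow{a}2\xleftarrow{b}3$, with $\E_1=\E_3=X\times\mathbb{R}$, $\E_2=X\times\mathbb{R}^2$, $\phi_a(s)=(s,0)$ and $\phi_b(s)=(0,s)$. The trivial connections $d$ are compatible. Choose $F_{s(a)}=\E_1$ (forced) and $F_{t(a)}$ spanned by $(x,1)$. Then $\psi_a(u,v)=u-xv$, so the path morphism $\psi_a\circ\phi_b$ of $\mathcal{R}^a$ is multiplication by $-x$. By Proposition~\ref{Constant rank}, $\mathcal{R}^a$ admits no relative connection at all.

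\begin{itemize}
\item No correction $\alpha$ of the kind you describe can exist. Making $F_{t(a)}$, $\im\phi_a$ and $\im\phi_b$ all invariant is impossible, since $F_{t(a)}$ meets $\im\phi_b$ only at $x=0$.
\item The same example refutes your fallback. With $F_{t(a)}$ spanned by $(0,1)$, the reversed bundle does carry a connection, so different complements can give non-isomorphic $Q^a$-bundles. The unipotent automorphism $(u,v)\mapsto(u+xv,v)$ of $\E_2$ moves one complement to the other, but it does not commute with $\phi_b$.
\end{itemize}

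Step 1's reduction of (2)$\Rightarrow$(1) therefore collapses too. In fact (2)$\Rightarrow$(1) fails on its own. Keep $X$, $Q$, the $\E_i$ and $\phi_a$ as above, but put $\phi_b(s)=(s,xs)$.
\begin{itemize}
\item $\mathcal{R}$ has no relative connection: $\im\phi_a$ and $\im\phi_b$ would be parallel line subbundles of $\E_2$ that coincide at $x=0$ but not at $x=1$.
\item With $F_{t(a)}$ spanned by $(0,1)$, one gets $\psi_a(u,v)=u$. The chain $3\to2\to1$ then carries a connection: $d$ on $\E_1$ and $\E_3$, and on $\E_2$ the connection making the frame $(1,x),(0,1)$ parallel.
\end{itemize}

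So whether ``the corresponding $\mathcal{R}^a$'' means a fixed choice, some choice, or every choice of complements, one direction of the stated equivalence fails. What can actually be proved is your last paragraph: a relative connection on $\mathcal{R}$ for which $F_{s(a)}$ and $F_{t(a)}$ are parallel is also a relative connection on $\mathcal{R}^a$.

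The paper's own proof does not get around this. It simply asserts $\nabla_{\E_{s(a)}}=\nabla_{\ker\phi_a}+\nabla_{F_{s(a)}}$ and $\nabla_{\E_{t(a)}}=\nabla_{\im\phi_a}+\nabla_{F_{t(a)}}$, which tacitly assumes the chosen complements are parallel. That is precisely the step you correctly flagged as unjustified.
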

\begin{proof}
Assume that the $Q$-bundle $\mathcal{R}=(\mathcal{E},\phi)$ admits a smooth relative connection. Clearly, the morphism $\phi_a$ is of constant rank by Proposition \ref{Constant rank}. From the construction of $\mathcal{R}^a$, it is enough to prove that the connection $\nabla_{\mathcal{E}_{s(a)}}$ on $\mathcal{E}_{s(a)}$ and $\nabla_{\mathcal{E}_{t(a)}}$ on $\mathcal{E}_{t(a)}$ are compatible with the morphism $\imath \circ \tilde{\phi_a}^{-1} \circ p : \mathcal{E}_{t(a)} \rightarrow \mathcal{E}_{s(a)}$. The compatibility of the connections with $\phi_a$ gives the induced connections on $\ker~\phi_a$ and $\im~\phi_a$. Further, that decomposes the connection
    $$\nabla_{\mathcal{E}_{s(a)}}=\nabla_{\ker~\phi_a}+\nabla_{F_{s(a)}}$$
    and 
    $$\nabla_{\mathcal{E}_{t(a)}}=\nabla_{\im~\phi_a}+\nabla_{F_{t(a)}}\,.$$
In this way, we have a connection on each component $\ker~\phi_a,\, F_{s(a)},\, \im~\phi_a$ and $F_{t(a)}$. These connections are compatible with the map $p,\, \imath$ and the isomorphism $\tilde{\phi_a}$ induced by $\phi_a$. i.e. we have
$$\nabla_{\im~\phi_a} \circ \tilde{\phi_a}= (\tilde{\phi_a}\otimes \mathrm{id})\circ \nabla_{F_{s(a)}}\,.$$
That implies: 
$$(\tilde{\phi_a}\otimes \mathrm{id})^{-1} \circ \nabla_{\im~\phi_a} \circ \tilde{\phi_a} \circ \tilde{\phi_a}^{-1}= (\tilde{\phi_a}\otimes \mathrm{id})^{-1} \circ (\tilde{\phi_a}\otimes \mathrm{id})\circ \nabla_{F_{s(a)}} \circ \tilde{\phi_a}^{-1}$$
$$(\tilde{\phi_a}^{-1} \otimes \mathrm{id}) \circ \nabla_{\im(\phi_a)}= \nabla_{F_{s(a)}} \circ \tilde{\phi_a}^{-1}$$
Hence, the $Q^a$-bundle $\mathcal{R}^a$ admits a smooth relative connection.

Conversely,  suppose $\mathcal{R}^a$ admits a smooth relative connection. Note that $(Q^a)^a=Q$ and $(\mathcal{R}^a)^a\simeq \mathcal{R}$. Then, this assertion is followed by similar arguments as above. 
\end{proof}

Given a tree-type $Q$-bundle $\mathcal{R}=(\mathcal{E},\phi)$ with all morphisms $\phi_a$ of constant rank, we obtain a rooted directed tree-type quiver bundle after successive reversal of a finite set of arrows $a_1,\cdots,a_m$. We denote it by $\mathcal{R}^{(a_1,\cdots,a_m)}$.

\begin{theorem}
Let $\mathcal{R}=(\mathcal{E},\phi)$ be a tree-type quiver bundle. $\mathcal{R}$ admits a smooth relative connection if and only if all morphisms $\phi_a$ are of constant rank and all path morphisms of $\mathcal{R}^{(a_1,\cdots,a_m)}$ are of constant rank.
\end{theorem}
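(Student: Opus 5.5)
The plan is to reduce the statement to Proposition~\ref{tree} by repeatedly applying Proposition~\ref{P:Conn-Q^a}. Choose a root vertex $r$ of the underlying tree. Let $a_1,\dots,a_m$ be the arrows whose orientation points towards $r$ rather than away from it. Reversing exactly these arrows produces the rooted directed tree with root $r$. The bundle $\mathcal{R}^{(a_1,\dots,a_m)}$ is defined inductively: $\mathcal{R}^{(a_1,\dots,a_k)} := \big(\mathcal{R}^{(a_1,\dots,a_{k-1})}\big)^{a_k}$. Since distinct arrows are reversed and reversing $a_k$ changes only the morphism attached to $a_k$, the morphism attached to $a_k$ in $\mathcal{R}^{(a_1,\dots,a_{k-1})}$ is still the original $\phi_{a_k}$. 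So at each step the constant-rank hypothesis needed to form the reversal is exactly that $\phi_{a_k}$ has constant rank.

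\emph{($\Rightarrow$)} Suppose $\mathcal{R}$ admits a smooth relative connection. By Proposition~\ref{Constant rank} applied to paths of length one, every $\phi_a$ has constant rank. We then induct on $k$, applying the implication (1)$\Rightarrow$(2) of Proposition~\ref{P:Conn-Q^a} to $\mathcal{R}^{(a_1,\dots,a_{k-1})}$ and the arrow $a_k$. This shows that each $\mathcal{R}^{(a_1,\dots,a_k)}$ admits a smooth relative connection. Hence $\mathcal{R}^{(a_1,\dots,a_m)}$ does, and Proposition~\ref{Constant rank} (or the forward direction of Proposition~\ref{tree}) gives that all its path morphisms have constant rank.

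\emph{($\Leftarrow$)} Assume all $\phi_a$ have constant rank and all path morphisms of $\mathcal{R}^{(a_1,\dots,a_m)}$ have constant rank. By the observation above, every intermediate reversal is well defined. Proposition~\ref{tree} then shows that $\mathcal{R}^{(a_1,\dots,a_m)}$ admits a smooth relative connection. We descend using (2)$\Rightarrow$(1) of Proposition~\ref{P:Conn-Q^a}, with $k=m,m-1,\dots,1$. At step $k$, the arrow $a_k$ in $\mathcal{R}^{(a_1,\dots,a_{k-1})}$ carries $\phi_{a_k}$, which has constant rank by hypothesis, and its reversal $\mathcal{R}^{(a_1,\dots,a_k)}$ admits a connection. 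Hence $\mathcal{R}^{(a_1,\dots,a_{k-1})}$ admits one as well. After $m$ steps, $\mathcal{R}$ admits a smooth relative connection.

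The main obstacle I expect is well-definedness: $\mathcal{R}^{(a_1,\dots,a_m)}$ depends on the choice of complements $F_{s(a)}$ and $F_{t(a)}$, and possibly on the root. I would handle this by reading the theorem as "for one (equivalently any) such choice". The argument above works verbatim for any fixed choice, since the forward direction shows that every choice yields a bundle with a connection. I would also check, when invoking Proposition~\ref{P:Conn-Q^a}(1)$\Rightarrow$(2), that the constructed reversal uses the same complements as fixed in the definition. Its proof indeed works for arbitrary complements: a compatible connection preserves $\ker\phi_a$ and $\im\phi_a$, and it can be modified on the complements without affecting compatibility on the other arrows. This last claim is the point needing the most care, since altering the connection on $\mathcal{E}_{s(a)}$ may break compatibility with other arrows at that vertex. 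If it fails in general, I would fall back on the weaker reading in which the complements are chosen to be the ones induced by the connection.
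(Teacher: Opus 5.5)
Your reduction is exactly the paper's proof: iterate Proposition~\ref{P:Conn-Q^a} arrow by arrow, then apply Proposition~\ref{tree}. However, the point you flag is a genuine failure, and your proposed repair is false.

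Take $X=\mathbb{R}$, the quiver $1\xrightarrow{a}2\xleftarrow{b}3$, $\mathcal{E}_1=\mathcal{E}_3=X\times\mathbb{R}$, $\mathcal{E}_2=X\times\mathbb{R}^2$, $\phi_a(t)=(t,0)$, $\phi_b(t)=(0,t)$, and $\nabla_i=d$ at every vertex; these are compatible. Root at $3$, so only $a$ is reversed, and choose the smooth complement $F_2=\mathrm{span}\{(x,1)\}$ of $\im\phi_a$. Then $\psi_a(u,v)=u-xv$, and the path morphism $\psi_a\circ\phi_b$ is multiplication by $-x$. By Proposition~\ref{Constant rank}, this $\mathcal{R}^a$ has no smooth relative connection. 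So ``every choice yields a bundle with a connection'' is false, and no modification of $\nabla_2$ on the complement can help: a connection compatible with $\psi_a$ and $\phi_b$ must keep the lines $\ker\psi_a=F_2$ and $\im\phi_b=\mathrm{span}\{(0,1)\}$ parallel, and these coincide only at $x=0$. Thus (1)$\Rightarrow$(2) of Proposition~\ref{P:Conn-Q^a} holds only for well-chosen complements. Your fallback to complements ``induced by the connection'' does not fix this in general, since a compatible connection need not preserve any complement of $\im\phi_a$. It also does nothing for ($\Leftarrow$).

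In fact ($\Leftarrow$) fails outright. Keep the same data but set $\phi_b(t)=(t,xt)$. Both arrows have constant rank $1$.
Rooting at $2$ gives $1\leftarrow 2\rightarrow 3$, whose only nontrivial path morphisms are the two reversed maps; these have constant rank for every choice of complements.
Rooting at $3$ with $F_2=\mathrm{span}\{(0,1)\}$ gives $\psi_a(u,v)=u$ and $\psi_a\circ\phi_b=\mathrm{id}$. This $\mathcal{R}^a$ even carries an explicit compatible connection: make the frame $(1,x),(0,1)$ of $\mathcal{E}_2$ parallel.

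Yet $\mathcal{R}$ itself has no compatible connection. A compatible $\nabla_2$ would keep $\im\phi_a=\mathrm{span}\{(1,0)\}$ and $\im\phi_b=\mathrm{span}\{(1,x)\}$ parallel, and these agree at $x=0$ but not at $x=1$, which parallel transport forbids. So (2)$\Rightarrow$(1) of Proposition~\ref{P:Conn-Q^a} is false as well.

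Proposition~\ref{tree} also fails. Take the rooted bundle $1\leftarrow 2\rightarrow 3$ with maps $(u,v)\mapsto v$ and $(u,v)\mapsto v-xu$. All its path morphisms have constant rank, but a compatible $\nabla_2$ would keep both kernels $\mathrm{span}\{(1,0)\}$ and $\mathrm{span}\{(1,x)\}$ parallel, which is impossible for the same reason. Concretely, the intersections $\mathcal{E}_r^{(v_1,m_1)}\cap\cdots\cap\mathcal{E}_r^{(v_l,m_l)}$ in its proof need not span $\mathcal{E}_r$.

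The two examples together show that the theorem fails under every reading of the choices of root and complements, whether quantified existentially or universally. So neither your argument nor the paper's one-line proof can be completed. The missing invariant is the relative position of subbundles meeting at a vertex, which path-morphism ranks do not detect.

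What does hold is the following. Compatibility is affine in $\nabla$, so compatible connections glue by partitions of unity. On a ball, radial parallel transport makes every $\phi_a$ constant. Hence $\mathcal{R}$ admits a smooth relative connection if and only if it is locally isomorphic to a constant quiver representation.
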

\begin{proof}
The result follows from Proposition~\ref{tree} together with Proposition~\ref{P:Conn-Q^a}.
\end{proof}


\section{Representations and flat connection}

Let $X$ be a smooth, connected manifold, and let $x\in X$ be fixed. Let $\mathbf{VB}(X)^{\rm F}$ denote the category of flat vector bundles over $X$ and $\mathrm{Rep}(\pi_1(X,x), \mathbf{VSpace})$ denote the category of representations of $\pi_1(X,x)$ in the category $\mathbf{VSpace}$ of finite dimensional $\mathbb{R}$-vector spaces.  We recall the equivalence between the categories  $\mathbf{VB}(X)^{\rm F}$ and $\mathrm{Rep}(\pi_1(X,x), \mathbf{VSpace})$.

Let $\E$ be a smooth vector bundle of rank $n$ on $X$ with a flat connection $\nabla$. Define a representation 
$$\rho_{(\E,\nabla)}:\pi_1(X,x)\longrightarrow \mathrm{GL}(\E_x)$$
by mapping each loop to the automorphism of the fiber given by parallel transport along the loop. Since the connection $\nabla$ is flat, $\rho_{(\E,\nabla)}$ is well-defined. The map $\rho_{(\E,\nabla)}$ is called \emph{monodromy representation} corresponding to $(\E,\nabla)$. Conversely, given a representation 
$$\rho:\pi_1(X,x)\longrightarrow \mathrm{GL}(V)\,,$$
there is an associated vector bundle $\E_{\rho}:= (\tilde{X} \times V)/\pi_1(X,x)$, where the action of $\pi_1(X,x)$ on $\tilde{X}$ is given by the Deck transformation and on $V$ by the representation $\rho$. Note that the pull-back of $\E_{\rho}$ is trivial over the universal covering map $\tilde{X}\rightarrow X$. This trivial bundle has a natural flat connection given by the exterior derivatives, which descends the connection on $\E_{\rho}$. In this way, given a representation, we have a vector bundle with a flat connection. 

In this way, we have a correspondence between vector bundles admitting a flat connection and representations of the fundamental group. This correspondence is functorial and gives an equivalence between these two categories. It is known as the \emph{Riemann-Hilbert correspondence} in the smooth category and also holds in the holomorphic setup.

Let $(\E_1,\nabla_1)$ and $(\E_2,\nabla_2)$ be two flat bundles and $\phi:(\E_1,\nabla_1)\rightarrow (\E_2,\nabla_2)$ be a morphism of flat bundles, i.e., morphism of vector bundles which is compatible with the flat connections $\nabla_1$ and $\nabla_2$. The map $\phi$ induces a map $\rho_{\phi}: \rho_{(\E_1,\nabla_1)}\rightarrow \rho_{(\E_2,\nabla_2)}$, i.e., for each $g\in \pi_1(X,x)$, there is a homomorphism $\mathrm{GL}({\E_1}_x)\rightarrow \mathrm{GL}({\E_2}_x)$ which is compatible with the map $\phi_x:{\E_1}_x \rightarrow {\E_2}_x$. In other words, the map $\phi$ can be regarded as a flat $\mathbb{A}_2$-type quiver bundle which is assigned to a representation of $\pi_1(X,x)$ in the category of $\mathbb{A}_2$-type quiver vector spaces. This is true more generally for any quiver (see Corollary \ref{cor}).

Let $G$ be a group and $\mathcal{C}$ be an abelian category. Consider the category $\mathrm{Rep}(G, \mathcal{C})$ with the following data:

\begin{itemize}
    \item Objects are pairs $(V, \rho)$, where $V \in \mathrm{Ob}(\mathcal{C})$ and $\rho: G\rightarrow \mathrm{Iso}(V)$ is a homomorphism. The set $\mathrm{Iso}(V)$ denotes the group under the composition of isomorphisms from $V$ to $V$.
    \item Let $(V_1,\rho_1)$ and $(V_2,\rho_2)$ be two objects of $\mathrm{Rep}(G,\mathcal{C})$. The morphism $f:(V_1,\rho_1) \rightarrow (V_2,\rho_2)$ is a morphism $f: V_1 \rightarrow V_2 \in \mathrm{Hom}_{\mathcal{C}}(V_1,V_2)$ such that $f\circ \rho_1(g) = \rho_2(g) \circ f$, for all $g\in G$.
\end{itemize}

The category $\mathrm{Rep}(G,\mathcal{C})$ is an abelian category. We want to prove an equivalence between the categories $\mathrm{Rep}(Q,\mathrm{Rep}(G,\mathcal{C}))$ and $\mathrm{Rep}(G,\mathrm{Rep}(Q,\mathcal{C}))$. Let $A=((V,\rho),\phi)$ be an object of $\mathrm{Rep}(Q,\mathrm{Rep}(G,\mathcal{C}))$, i.e., there is a collection $(V,\rho)=\{(V_i,\rho_i)\}_{i\in Q_0}$ and $\phi=\{\phi_a: (V_{s(a)},\rho_{s(a)})\rightarrow (V_{t(a)},\rho_{t(a)})\}_{a\in Q_1}$, where $(V_i,\rho_i)\in \mathrm{Ob}(\mathrm{Rep}(G,\mathcal{C}))$ and $\phi_a \in \mathrm{Mor}(\mathrm{Rep}(G,\mathcal{C}))$. We will define the corresponding object $F(A)$ of $A$ in the category $\mathrm{Rep}(G,\mathrm{Rep}(Q,\mathcal{C}))$. Consider the object $\mathbb{V}:=(\{V_i\}_{i\in Q_0},\{\phi_a:V_{s(a)}\rightarrow V_{t(a)}\}_{a\in Q_1})$ of $\mathrm{Rep}(Q,\mathcal{C})$, where $\phi_a$ is a morphism in $\mathcal{C}$ via the natural inclusion. Define $F(A):= (\mathbb{V},\rho)$, where $\rho: G\rightarrow \mathrm{Iso}(\mathbb{V})$ is a homomorphism given by $\rho=(\rho_i:G\rightarrow \mathrm{Iso}(V_i))$. Let $\Psi: ((V,\rho),\phi) \rightarrow ((W,\rho'),\phi')$ be a morphism in the category $\mathrm{Rep}(Q,\mathrm{Rep}(G,\mathcal{C}))$, i.e., $\Psi$ is a collection $\{ \Psi_{i}: (V_i,\rho_i)\rightarrow (W_i,\rho_i') \}_{i\in Q_0}$ of morphisms which is compactible with $\phi_a$ and $\phi_a'$ for all $a\in Q_1$. We define the associated morphism $F(\Psi): (\mathbb{V},\rho)\rightarrow (\mathbb{W},\rho')$ in the category $\mathrm{Rep}(G,\mathrm{Rep}(Q,\mathcal{C}))$, i.e., we need to specify a morphism $\Phi:\mathbb{V}\rightarrow \mathbb{W}$ which is compatible with $\rho$ and $\rho'$. The collection $\{\Psi_i \}_{i\in Q_0}$ will define the morphism $\Phi$ and it is compatible as $\Psi_i$'s are compatible with $\phi_a$ and $\phi_a'$. In this way, we have a functor 
$$F:\mathrm{Rep}(Q,\mathrm{Rep}(G,\mathcal{C}))\longrightarrow \mathrm{Rep}(G,\mathrm{Rep}(Q,\mathcal{C}))\,.$$
This functor gives an equivalence of categories. The inverse functor can be given as follows. Let $B:=((V,\phi),\rho)$ be an object of $\mathrm{Rep}(G, \mathrm{Rep}(Q,\mathcal{C}))$. We want to define an associated object $G(B)$ in the category $\mathrm{Rep}(Q,\mathrm{Rep}(G,\mathcal{C}))$. The collection $\{(V_i,\rho_i)\}_{i\in Q_0}$ with the morphisms $\{\phi_a\}_{a\in Q_1}$ gives an object of the category $\mathrm{Rep}(Q,\mathrm{Rep}(G,\mathcal{C}))$. Let $\Theta: ((V,\phi),\rho)\rightarrow ((W,\phi'),\rho')$ be a morphism in the category $\mathrm{Rep}(G,\mathrm{Rep}(Q,\mathcal{C}))$. This has a collection of morphisms $\{ \Theta_i: (V_i,\phi)\rightarrow (W_i,\phi')\}_{i\in Q_0}$ which is compatible with $\rho$ and $\rho'$. Define $G(\Theta): ((V,\rho),\phi)\rightarrow ((W,\rho'),\phi')$ by using the $\Theta_i$ at each vertices. This gives morphism in the category $\mathrm{Rep}(Q,\mathrm{Rep}(G,\mathcal{C}))$. In this way, we have a functor 
$$G:\mathrm{Rep}(G,\mathrm{Rep}(Q,\mathcal{C}))\longrightarrow \mathrm{Rep}(Q,\mathrm{Rep}(G,\mathcal{C}))$$
which is the inverse of the functor $F$. Hence, we have the following proposition. 

\begin{proposition}\label{Rep=Flatconn}
    The following categories are equivalent:
    $$\mathrm{Rep}(Q,\mathrm{Rep}(G,\mathcal{C})) \simeq \mathrm{Rep}(G,\mathrm{Rep}(Q,\mathcal{C}))$$
\end{proposition}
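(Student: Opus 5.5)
The plan is to exhibit explicit mutually inverse functors $F$ and $G$, as sketched before the statement, and to verify in turn that each is well defined on objects and on morphisms, that each is functorial, and that both composites $G\circ F$ and $F\circ G$ are \emph{equal} to the identity functors, not merely isomorphic to them. The guiding observation is that both categories are described by the same underlying data: a family $\{V_i\}_{i\in Q_0}$ of objects of $\mathcal{C}$, a family of morphisms $\{\phi_a: V_{s(a)}\to V_{t(a)}\}_{a\in Q_1}$ in $\mathcal{C}$, and a family of group homomorphisms $\{\rho_i: G\to \mathrm{Iso}(V_i)\}_{i\in Q_0}$. In both categories these data are subject to the single compatibility condition
\[
\phi_a\circ \rho_{s(a)}(g)=\rho_{t(a)}(g)\circ \phi_a \quad\text{for all } a\in Q_1,\ g\in G .
\]

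\textbf{Step 1: objects.} For $A=((V,\rho),\phi)$, each $\phi_a$ is a morphism in $\mathrm{Rep}(G,\mathcal{C})$, which is exactly the displayed identity. I would check that $g\mapsto(\rho_i(g))_{i\in Q_0}$ lands in $\mathrm{Iso}(\mathbb{V})$, the automorphism group of $\mathbb{V}$ in $\mathrm{Rep}(Q,\mathcal{C})$. This needs two facts:
\begin{itemize}
\item each tuple $(\rho_i(g))_i$ is a morphism of quiver representations, which is again the displayed identity;
\item its inverse is $(\rho_i(g^{-1}))_i$, which is also a morphism of quiver representations.
\end{itemize}
Since each $\rho_i$ is a homomorphism, so is $g\mapsto(\rho_i(g))_i$. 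Conversely, a homomorphism $\rho: G\to \mathrm{Iso}(\mathbb{V})$ has components $\rho(g)=(\rho_i(g))_i$, because composition in $\mathrm{Rep}(Q,\mathcal{C})$ is componentwise. Each component $\rho_i: G\to \mathrm{Iso}(V_i)$ is then a homomorphism, and the fact that each $\rho(g)$ is a morphism of quiver representations says that every $\phi_a$ is $G$-equivariant. This is what makes $G(B)$ well defined.

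\textbf{Step 2: morphisms and functoriality.} In both categories a morphism is a family $\{\Psi_i: V_i\to W_i\}_{i\in Q_0}$ satisfying two conditions:
\begin{itemize}
\item $\Psi_{t(a)}\phi_a=\phi'_a\Psi_{s(a)}$ for all $a\in Q_1$;
\item $\Psi_i\rho_i(g)=\rho'_i(g)\Psi_i$ for all $i\in Q_0$ and $g\in G$.
\end{itemize}
The two categories merely group these conditions differently. Hence $F$ and $G$ act as the identity on the underlying families of morphisms. They preserve identities and composition because composition in both categories is componentwise composition in $\mathcal{C}$.

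\textbf{Step 3: inverse functors.} From the explicit descriptions, $G\circ F$ and $F\circ G$ return literally the same data, so both composites are identities and the categories are isomorphic, and in particular equivalent. One could add the remark that $F$ is additive, and even exact, since kernels and cokernels in both categories are computed vertexwise and pointwise in $\mathcal{C}$. This is not needed for the statement.

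The main obstacle, though a mild one, is the identification in Step 1 of a homomorphism $G\to\mathrm{Iso}(\mathbb{V})$ with a family of homomorphisms $G\to\mathrm{Iso}(V_i)$ intertwining the $\phi_a$. Specifically, one must confirm that an isomorphism in $\mathrm{Rep}(Q,\mathcal{C})$ is exactly a family of isomorphisms in $\mathcal{C}$ that commute with the $\phi_a$, so that the inverse family is automatically a morphism of quiver representations. I would prove this first, by composing the compatibility square with the inverses on both sides, and then let everything else follow by bookkeeping.
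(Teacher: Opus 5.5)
Your proposal is correct and follows essentially the same route as the paper: both define explicit functors $F$ and $G$ that simply regroup the same data $(\{V_i\}_{i\in Q_0},\{\phi_a\}_{a\in Q_1},\{\rho_i\}_{i\in Q_0})$ and act as the identity on the underlying families of morphisms, so that the two composites are strictly the identity. You also verify details the paper leaves implicit, namely that $g\mapsto(\rho_i(g))_i$ lands in $\mathrm{Iso}(\mathbb{V})$ and that a homomorphism $G\to\mathrm{Iso}(\mathbb{V})$ decomposes into component homomorphisms $\rho_i$ intertwining the $\phi_a$, and you correctly note that this gives an isomorphism of categories, not just an equivalence.
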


\begin{definition}
\rm    A $Q$-bundle $\mathcal{R}=(\E,\phi)$ is called a \emph{flat $Q$-bundle} if it admits a compatible flat connection at each vertex. 
\end{definition}

The category of flat $Q$-bundles is equivalent to the category
$\mathrm{Rep}(Q,\mathbf{VB}(X)^{\rm F})$.
 As mentioned in the beginning of this section, we have an equivalence between the categories $\mathrm{Rep}(\pi_1(X), \mathbf{VSpace})$ and $\mathbf{VB}(X)^{\rm F}$. By using this, one can identify the category of quiver bundles having a flat connection with the category $\mathrm{Rep}(Q,\mathrm{Rep}(\pi_1(X),\mathbf{VSpace}))$. By Proposition \ref{Rep=Flatconn}, we get a condition when a given bundle admits a flat connection.

\begin{cor}\label{cor}
    The following categories are equivalent:
    $$\mathrm{Rep}(Q,\mathbf{VB}(X)^{\rm F})\simeq \mathrm{Rep}(\pi_1(X,x),\mathrm{Rep}(Q,\mathbf{VSpace}))$$
    i.e. the category of flat quiver bundles over 
$X$ is equivalent to the category of representations of 
$\pi_1(X,x)$ into the category of quiver representations.
\end{cor}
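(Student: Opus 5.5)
The plan is to chain two equivalences. The first is the Riemann--Hilbert equivalence $\mathbf{VB}(X)^{\rm F}\simeq \mathrm{Rep}(\pi_1(X,x),\mathbf{VSpace})$ recalled at the start of this section. It is induced by the monodromy functor $(\E,\nabla)\mapsto \rho_{(\E,\nabla)}$, which sends a flat morphism $\phi$ to its fibre map $\phi_x$. The second is Proposition \ref{Rep=Flatconn} with $G=\pi_1(X,x)$ and $\mathcal{C}=\mathbf{VSpace}$, which is abelian. Composing them will give
$$\mathrm{Rep}(Q,\mathbf{VB}(X)^{\rm F})\simeq \mathrm{Rep}(Q,\mathrm{Rep}(\pi_1(X,x),\mathbf{VSpace}))\simeq \mathrm{Rep}(\pi_1(X,x),\mathrm{Rep}(Q,\mathbf{VSpace})).$$

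The first step is a general lemma: any equivalence $\Phi:\mathcal{D}\to\mathcal{D}'$ induces an equivalence $\mathrm{Rep}(Q,\mathcal{D})\to\mathrm{Rep}(Q,\mathcal{D}')$. The functor applies $\Phi$ vertexwise and arrowwise, $(V_i,\phi_a)\mapsto(\Phi V_i,\Phi\phi_a)$. On morphisms it applies $\Phi$ componentwise, and commutativity of the squares is preserved because $\Phi$ is a functor. To show it is an equivalence, I will fix a quasi-inverse $\Psi$ with natural isomorphisms $\eta:\mathrm{id}\Rightarrow\Psi\Phi$ and $\epsilon:\Phi\Psi\Rightarrow\mathrm{id}$ and apply $\Psi$ the same way. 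Naturality of $\eta$ applied to each arrow morphism $\phi_a$ shows that the collection $\{\eta_{V_i}\}_{i\in Q_0}$ is a morphism of $Q$-representations, and it is an isomorphism. The same argument works for $\epsilon$. Thus the induced functors are quasi-inverse.

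The second step is to check that the phrase ``flat quiver bundle'' really matches $\mathrm{Rep}(Q,\mathbf{VB}(X)^{\rm F})$. An object of that category is a family of flat bundles $(\E_i,\nabla_i)$ with morphisms $\phi_a$ of flat bundles. Being a morphism of flat bundles means exactly the compatibility condition of Definition \ref{QC} with trivial twisting. So this is a $Q$-bundle together with a flat smooth relative connection, which is the identification stated before the corollary. Under the first equivalence, a flat quiver bundle then corresponds to the family of monodromy representations $\rho_i$ on $(\E_i)_x$ together with the fibre maps $(\phi_a)_x$. These maps intertwine the $\rho_i$ because parallel transport commutes with $\phi_a$, as in the proof of Proposition \ref{Constant rank}. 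Proposition \ref{Rep=Flatconn} then repackages this data as a single representation of $\pi_1(X,x)$ on the quiver representation $(\{(\E_i)_x\},\{(\phi_a)_x\})$.

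The main point needing care is the first equivalence on morphisms, specifically full faithfulness of the monodromy functor. A morphism of flat bundles is determined by its value at $x$, because both sides are parallel and $X$ is connected. Conversely, a $\pi_1$-equivariant linear map on fibres extends uniquely by parallel transport to a parallel bundle map. Well-definedness of that extension comes from equivariance, which makes it independent of the chosen path. This is the standard Riemann--Hilbert argument, and I would cite it rather than reprove it. Once it is granted, the rest is formal bookkeeping.
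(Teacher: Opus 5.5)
Your proposal is correct and follows the paper's argument. You identify flat $Q$-bundles with $\mathrm{Rep}(Q,\mathbf{VB}(X)^{\rm F})$, transport vertexwise along the Riemann--Hilbert equivalence, and then apply Proposition \ref{Rep=Flatconn} with $G=\pi_1(X,x)$ and $\mathcal{C}=\mathbf{VSpace}$. Two things you make explicit that the paper leaves implicit: the lemma that an equivalence $\mathcal{D}\simeq\mathcal{D}'$ lifts to $\mathrm{Rep}(Q,\mathcal{D})\simeq\mathrm{Rep}(Q,\mathcal{D}')$, and the reading of flatness as data (a chosen flat relative connection) rather than a mere existence property.
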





\end{document}